\newtheorem{theorem}{Theorem}
\theoremstyle{plain}
\newtheorem{acknowledgement}{Acknowledgement}
\newtheorem{remark}{Remark}
\numberwithin{equation}{section}
\newcommand{\diag}{{\rm diag}}
\newcommand{\PP}{{\sf P\,}}
\def\ce{\operatorname{ce}}
\def\se{\operatorname{se}}
\def\trace{\operatorname{trace}}
\def\ff{{\overline f}}
\def\SEN{{\widehat{SE(2,N)}}}
\newcommand{\I}{ \bigl |}
\begin{document}
\title[hypoelliptic diffusion and human vision]{Hypoelliptic diffusion and human vision: \\
a semi-discrete new twist}
\author[U. Boscain]{Ugo Boscain}
\address{CNRS, CMAP, \'{E}cole Polytechnique CNRS, Route de Saclay, 91128 Palaiseau Cedex, France; INRIA Team GECO}
\email{boscain@cmap.polytechnique.fr}
\urladdr{http://www.cmapx.polytechnique.fr/\symbol{126}boscain/}
\author[R. Chertovskih]{Roman Chertovskih}
\address{Centre for Wind Energy and Atmospheric Flows, Faculdade de Engenharia da Universidade do Porto,
Rua Dr. Roberto Frias s/n, 4200-465 Porto, Portugal;
International Institute of Earthquake Prediction Theory and
Mathematical  Geophysics, Profsoyuznaya str. 84/32, 117997  Moscow, Russia}
\email{roman@mitp.ru}
\author[J.-P. Gauthier]{Jean-Paul Gauthier}
\address{LSIS, UMR\ CNRS 7296, Universt\'{e} de Toulon USTV, 83957, La Garde Cedex, France; INRIA Team GECO}
\email{gauthier@univ-tln.fr}
\urladdr{http://www.lsis.org/jpg}
\author[A. Remizov]{Alexey Remizov}
\address{CMAP, \'{E}cole Polytechnique CNRS, Route de Saclay, 91128 Palaiseau Cedex, France}
\email{alexey-remizov@yandex.ru}
\date{January 30, 2013}
\subjclass[2000]{Primary 93-04; Secondary 93C10, 93C20}
\keywords{Sub-Riemannian geometry, image reconstruction, hypoelliptic diffusion}

\newcommand{\blue}{\color{blue}}
\newcommand{\red}{\color{red}}

\begin{abstract}
This paper presents a semi-discrete alternative to  the theory of neurogeometry of vision, due to Citti, Petitot and Sarti.  We propose a new ingredient, namely  working on the group of translations and discrete rotations $SE(2,N)$.

The theoretical side of our study relates the stochastic nature of the problem with the Moore group structure of $SE(2,N)$. Harmonic analysis over this group leads to very simple finite dimensional reductions.

We then apply these ideas to the inpainting problem which is reduced to the integration of a completely parallelizable finite set of Mathieu-type diffusions (indexed by the dual of $SE(2,N)$ in place of the points of the Fourier plane, which is a drastic reduction).

The integration of the the Mathieu equations can be performed by standard numerical methods for elliptic diffusions and leads to a very simple and efficient class of inpainting algorithms.

We illustrate the performances of the method on a series of deeply corrupted images.
\end{abstract}
\maketitle

{\bf Keywords:} neurogeometry, hypoelliptic diffusion, sub-Riemannian geometry, generalized Fourier transform

\section{Introduction}

\subsection{History and pre-requisites}

In his beautiful book \cite{PET1}, Jean Petitot describes a sub-Riemannian model of the Visual cortex V1.

The main idea goes back  to the  paper \cite{hubel} by
H\"{u}bel an Wiesel in 1959 (Nobel prize in 1981)  who showed that in the
visual cortex V1, there are groups of neurons that are sensitive to
position and directions\footnote{ Geometers call ``directions'' (angles modulo $\pi$) what neurophysiologists call
``orientations''. }
with connections between them that are activated by the image.
The key fact is that the system of connections between neurons,
which is called the functional architecture of  V1,
preferentially connects  neurons detecting alignements.  This is the so-called  {\it pinwheels} structure of V1.

In 1989, Hoffman \cite{hoffman}  proposed to regard the visual cortex V1 as a manifold with a contact structure.  In  1998, Petitot \cite{petitot-tondut,PET} proposed to regard the  visual cortex V1 as a left-invariant sub-Riemannian structure on the Heisenberg group and gave an enormous impulse to the research on the subject. In 2006, Citti and Sarti \cite{CS} required the invariance under rototranslations and wrote the model on $SE(2)$.  In \cite{BDGR}, it was proposed to write the problem on $PT\mathbb{R}^2$ to avoid some topological problems and to be more consistent with the fact that the visual cortex V1 is sensitive only to directions (i.e., angles modulo $\pi$)  and not to directions with orientations (i.e., angles modulo $2\pi$).
Further development and elaboration of these ideas was carried on by various authors, 
see e.g., the papers~\cite{BSHIZ, WJ} and the references therein.

The detailed study of the sub-Riemannian geodesics was performed by Yuri Sachkov in a series of papers \cite{SAC,SAC1}.
This model was also deeply studied by Duits et al. in  \cite{DvA,DF,DF1} with  medical imaging applications in mind, and by  Hladky and  Pauls  \cite{HP}. Of course there are some close relations with the celebrated model by Mumford \cite{MUM}.

The main idea of the model (that in the following we call Citti-Petitot-Sarti's model) is  that V1 lifts the images $f(x,y)$ (i.e., functions of two position variables $x,y$ in the plane
$\mathbb{R}^{2}$ of the image) to functions over the projective
tangent bundle $PT\mathbb{R}^{2}$.
This bundle has as base $\mathbb{R}^{2}$ and as fiber over the point $(x,y)$ the set of directions  of straight lines lying on the plane and passing through $(x,y)$ and denoted by $\theta$.

In this model, a corrupted image is reconstructed  by minimizing the energy necessary to activate the regions of the visual cortex that are not excited by the image. If we think to an images as a set of (grey) level curves, each level curve is reconstructed by solving the following optimal control problem (in sub-Riemannian form)
\begin{eqnarray}
&&\Biggl(
\begin{array}
[c]{c}
\dot{x}(t)\\
\dot{y}(t)\\
\dot{\theta}(t)
\end{array}
\Biggr)=u(t)F(x,y,\theta)+v(t)G(x,y,\theta),
\label{e-1}\\
&&F(x,y,\theta)=\Biggl(
\begin{array}
[c]{c}
\cos(\theta)\\
\sin(\theta)\\
0
\end{array}
\Biggr),  \quad \
G(x,y,\theta)=
\Biggl(
\begin{array}
[c]{c}
0\\
0\\
1
\end{array}
\Biggr),\label{e-11}\\
&&\int\limits_0^T \biggl( u(t)^2+{\frac1\alpha} v(t)^2 \biggr) \,dt\to \min,\label{e-2}\\
&&\bigl(x(0),y(0),\theta(0)\bigr)=(x_0,y_0,\theta_0), \quad \ \bigl(x(T),y(T),\theta(T)\bigr)=(x_1,y_1,\theta_1).
\label{e-3}
\end{eqnarray}
Here $(x_0,y_0,\theta_0)$ and $(x_1,y_1,\theta_1)$ denote the initial and final condition in $PT\mathbb{R}^{2}$ of the level set that has to be reconstructed.
Equations \eqref{e-1}, \eqref{e-11} requires that the reconstructed curve is a lift of a planar curve.

In \eqref{e-2}, the final time $T$ should be fixed. The term $u(t)^2$ (respectively $v(t)^2$)  is proportional to the infinitesimal energy necessary to activate  
{\it horizontal} connections (resp.  {\it vertical}) connections.\footnote{We recall that neurons of V1 are grouped into orientation columns, each of them being sensitive to visual stimuli at a given
point  of the retina and for a given direction on it.
Orientation columns are themselves grouped into hypercolumns, each of them being sensitive
to stimuli at a given point  with any direction. Orientation columns are linked among them by two type of connections: vertical connections link orientation columns in the same hypercolumn, and
the  {\it horizontal} connections link orientation columns
belonging to different hypercolumns
and sensitive to the same orientation.
For an orientation column it is easy to activate another orientation column which is a ``first neighbor''  either by horizontal or by vertical connections.
}
The parameter $\alpha>0$ is a  relative weight. For more details on the sub-Riemannian problem see the original papers \cite{CS,petitot-tondut,PET}, the book \cite{PET1} or 
\cite{BCR,BDGR,cuspless, DBRS}  for a language more consistent with the one of this paper.

\begin{remark}
\label{remALFA}
From the theoretical point of view, the weight parameter $\alpha$ is irrelevant:
for any $\alpha >0$ there exists a homothety of the $(x,y)$-plane that maps geodesics
of the metric with the weight parameter $\alpha$ to those  of the metric with $\alpha=1$.
For this reason, in all theoretical considerations we fix
$\alpha=1$.
However its role will be important in our inpainting algorithms (see Section~\ref{weight}).
\end{remark}

The optimal control problem
\eqref{e-1}\,--\,\eqref{e-3}
described above  was used to reconstruct  smooth images (by reconstructing level sets) by Ardentov, Mashtakov and Sachkov \cite{ardentov}. The technique developed by them consists of reconstructing as minimizing geodesics the level sets of the image where they are interrupted.
Obviously, when applying this method to reconstruct images with large corrupted parts, one is faced to the problem that it is not clear how to put in correspondence the non-corrupted parts of the same level set.

To reconstruct a corrupted image (the V1 inpainting problem), it is natural to proceed as follows.
In the system \eqref{e-1}, excite all possible admissible paths in a stochastic
way. One gets a stochastic differential equation
\begin{equation}
\Biggl(
\begin{array}
[c]{c}
dx_{t}\\
dy_{t}\\
d\theta_{t}
\end{array}
\Biggr)
=
\Biggl(
\begin{array}
[c]{c}
\cos(\theta_{t})\\
\sin(\theta_{t})\\
0
\end{array}
\Biggr)
du_{t}
+
\Biggl(
\begin{array}
[c]{c}
0\\
0\\
1
\end{array}
\Biggr)
dv_{t},
\label{sys-stoch}
\end{equation}
where $u_{t}, v_{t}$ are two independent Wiener processes.

Consider the associated diffusion process (here we have fixed $\alpha=1$):
\begin{equation}
\frac{\partial \psi}{\partial t}= \frac{1}{2}\Delta \psi, \quad
\Delta = F^2 + G^2 = \biggl(\cos(\theta)\frac{\partial}{\partial x}+
\sin(\theta)\frac{\partial}{\partial y}\biggr)^{2} +
\frac{\partial^{2}}{\partial\theta^{2}}. \label{contdif}
\end{equation}
The operator $\Delta$ is not elliptic, but it is hypoelliptic (satisfies H\"{o}rmander
condition). In this setting, equation \eqref{contdif} was proposed first by Citti and Sarti in \cite{CS}.

To be able to use equation \eqref{contdif} to reconstruct a corrupted image, one has to specify two things: {\bf i)} how  to lift the corrupted 
image on $PT\mathbb{R}^2$ to get an initial condition for \eqref{contdif};  {\bf ii)}  how to project the result of the diffusion on $\mathbb{R}^2$ to get the reconstructed image.

Different  procedures for these steps have been proposed in \cite{BDGR,CS,DF,DF1,SC}. A clever lifting process was proposed  in \cite{DF,DF1}.

\begin{remark} {\bf Relation between the sub-Riemannian problem and the diffusion equation \eqref{contdif}}: 
\begin{itemize}
\item By the Feynman\,--\,Kac formula, integrating Equation~\eqref{contdif},  one expects to reconstruct the most probable missing level curves (among admissible).
\item The solution of \eqref{contdif} is strictly related to the solution of  the optimal control problem  \eqref{e-1}\,--\,\eqref{e-3}.
Indeed, a result by Leandre \cite{leandre1,leandre2} shows that $-t\, \mbox{log}(P_t(q,\bar q))$ tends (up to constants) to $E(q,\bar q)$,  
as $t \to 0$, where $P_t(q,\bar q)$ is the kernel of Equation \eqref{contdif} and $E(q,\bar q)$ is the value function for the problem \eqref{e-1}\,--\,\eqref{e-3}.
Here $q,\bar q\in PT\mathbb{R}^2$.
\end{itemize}
\end{remark}

There are at least two parameters to be suitably tuned: the time of the diffusion and the parameter $\alpha$.

The problem of the numerical integration of \eqref{contdif} is subtle since multiscale sub-Riemannian effects are hidden inside\footnote{For instance, one cannot use 
the nice method presented in \cite{achdou} for the Heisenberg group, since in $SE(2)$ one cannot build a refinable grid having a subgroup properties. 
Indeed, a grid that is subgroup of $SE(2)$ has at most 6 angles and hence cannot be refined, see e.g.,~\cite{brezzloff}.}
and has been approached in different way. In \cite{CS,SC}
the authors use a finite difference discretisation of all derivatives. In \cite{DvA,DF,DF1} the authors use an almost explicit expression for the heat kernel. 
In \cite{BDGR}  some promising results about inpainting using hypoelliptic diffusion were obtained via a sophisticated algorithm based upon the generalized Fourier 
transform on the group $SE(2)$. See also \cite{az,zw}.

The purpose of this paper is to approach the problem differently: we will not present a new method to solve numerically \eqref{contdif}, 
but rather a variant to the Citti-Petitot-Sarti model by assuming that the visual cortex can detect only a finite number of directions. 
This idea will permit to put the problem in a very different theoretical context and to overcome most of the difficulties in the numerical integration of \eqref{contdif}.

\subsection{Contents of the paper}

\subsubsection{A semidiscrete model}
In this paper, we introduce a new ingredient, namely we conjecture the following:

\begin{itemize}
\item the visual cortex can detect  a finite (small) number $N$ of directions only.
\end{itemize}

This conjecture is based on the observation of the organization of the visual cortex in pinwheels. The planar and the angular degrees of freedom cannot be treated in the same way. We conjecture that there are topological constraints that prevent the possibility of detecting a continuum of directions even when sending the distance between pinwheels to zero.

In this paper we are not  going to justify this conjecture, but   we can mention several previous contributions: from neurophysiology (see \cite{neuro1} and references therein) and from image processing engineering  (see \cite{lee}  and references therein).
Another justification from the present paper is just the observation that increasing $N$ does not lead to significant improvement in our reconstruction process (see Section~\ref{heursec}).

Rather, we concentrate  on the effect of this conjecture on the mathematical model.

This assumption being made, the group $SE(2,N)$ appears naturally and rather elementary stochastic analysis leads to the equivalent \eqref{FKP} below of the hypoelliptic diffusion \eqref{contdif}. These topics
are the purpose of Section \ref{SE2N} and  \ref{s-semi-d}, while
in Section \ref{s-se2} we recall shortly what happens considering $SE(2)$.

Here some points have to be stressed that are in order:
\begin{enumerate}
\item As usual for the treatment of invariant diffusions on topological groups, the basic tool is an analog of the usual Fourier Transform, i.e., the GFT, see the appendix \ref{GFT}.
\item The nature of $SE(2,N)$ is completely different from that of $SE(2)$: although non-abelian and non-compact, it is a Moore group, which is a special case of maximally almost periodic (MAP) groups (having the Chu duality property). See  appendix \ref{map} for a short introduction to these concepts.
\item The expression \eqref{Discrker} of the heat kernel on $SE(2,N)$ is much simpler than the one \eqref{kernel1} over $SE(2)$.
\item Although we use a ``small number of directions'' we show the following coherence property: when $N$ tends to infinity, the diffusion \eqref{FKP} tends to the diffusion \eqref{contdif} over $SE(2)$.

\end{enumerate}

\subsubsection{A first class of algorithms that is not suitable for our purposes}
In Section~\ref{prea} we discuss the use of the MAP property of $SE(2,N)$ for the integration of our semi-discrete diffusion \eqref{FKP}. The group $SE(2,N)$ being MAP, it follows (exactly as for the standard heat equation over $\mathbb{R}^n$) that the space of almost periodic functions is a good candidate for the space of solutions. In fact, restricting to finite dimensional subspaces (called $SE(2,N,K)$ below) of finite trigonometric polynomials (conceptually finite linear combinations of coefficients of unitary irreducible representations of $SE(2,N)$), an immediate computation leads to Equation~\eqref{redap}. This equation  shows (Theorem~\ref{th1}) that the integration of the diffusion on $SE(2,N)$ reduces to solving a finite (but large) set of finite dimensional linear evolution equations.

These equations can be solved in parallel, which fits with the structure of the visual cortex V1, which is thought to make a lot of parallel computations. See, for instance, \cite{parallel}.

It turns out that this space of solutions is not very relevant for the purpose of image reconstruction, mostly for two reasons:

\begin{enumerate}
\item it is not adapted to the usual simple representation of an image as a table of numbers (grey levels at points of a grid);
\item due to the lifting step the functions under considerations are far from being almost-periodic. In particular, in the limit $SE(2)$ case they are distributions as explained above. In the $SE(2,N)$ case, they are not continuous.
\end{enumerate}

\subsubsection{A final inpainting algorithm based on the semi-discrete model}

The two items just above are the reasons for the final algorithm proposed in Section \ref{algo}. Since a  grey-level  image is usually given under the guise of a (square, say)  table of positive real numbers, we will consider a spatial discretisation of the continuous part of the operator, coherent with this given grid. This discretisation produces the equation \eqref{deceq} which is exactly of the same type of equation \eqref{redap}.
This leads to Theorem~\ref{th2}: the integration of the spatially discretised operator over $SE(2,N)$ reduces to a large finite set of linear finite dimensional evolution equations of Mathieu type that can be  solved in parallel.

The following observation is important: the number of these Mathieu type diffusions reflects the structure of (the dual of) $SE(2,N)$ and not the number of points in the image.

\subsubsection{Heuristic complements and numerical results}

In Section \ref{heursec}, we show a number of numerical results.

In fact, in our algorithm, at this point, several steps are not fixed. We have chosen for these steps the most simple and maybe naive solutions, namely:
\begin{itemize}
\item The lifting procedure to $SE(2,N)$ that we implement is the most simple that can be thought: after smoothing the image (Gaussian smoothing), 
    the gradient at the point is well defined (both the smoothing and the gradient computation are performed at the level of the FFT of the image), 
    and we just lift to the closest value of the angle.\footnote{However, it has been pointed out by a referee that certain more effective lifting procedures 
    (i.e., adapted to the $SE(2)$ or even $SE(2,N)$ structures), can be developed, see~\cite{DF,DF1}. But this is beyond the scope of this paper.
    }

To simplify, even when we know where the image is corrupted we always compute the gradient in this rough way. This point could certainly be improved on. The only place where we eventually use the knowledge of the corrupted part is inside the integration procedure. See the concepts of static and dynamic restorations in Section~\ref{heursec}.

\item The projection step. There are at least two obvious possibilities: projecting by taking
either the maximum, or the average, over angles. Numerous
experiments show that the projection made by the maximum provides
better results. For less rough ideas, see \cite{DFG,forssen}.

\item The numerical resolution of the Mathieu type diffusions: they are finite dimensional evolution equations with elliptic second term. We have chosen the Crank-Nicolson method which is recommended in such a situation. We do not intend to discuss the convergence of this method to the solution to the initial hypoelliptic diffusion \eqref{contdif},
this is of no interest within the scope of this paper.
\end{itemize}

In this Section \ref{heursec} we present also some heuristic improvements of the
algorithm in the case where we can distinguish between the corrupted
and non-corrupted parts of the image.

\subsubsection{End of the paper}
In Appendix~\ref{APP6}, we present a series
of results of reconstruction with high corruption rates  and the table of all parameters
used for the reconstruction procedures. These parameters include: the time of diffusion, the weight $\alpha$, and other parameters that are used to ``tune'' the action of the diffusion on the corrupted and non corrupted points.

Finally,  Appendix~\ref{APP7}  is devoted to basic theoretical concepts and to certain technical computations.

\medskip

We do not pretend that our results are better than the inpainting algorithms existing today. We just claim that they really seem to validate our semi-discrete version of the Citti-Petito-Sarti model. Moreover, we emphasize the global character of the basic algorithm.

To finish let us underline the main point. The most interesting feature of the ``algorithms'' resulting from the present  study is the following:  
they are naturally massively parallel, which reflects the structure of the dual of the group $SE(2,N)$. 



\section{The Operators and the groups under consideration\label{sec1}}

\subsection{Groups}

We advise the uninitiated reader to start with our paper \cite{ABG}
and to have a look to Appendix~\ref{APP7} at the end of this paper.

\subsubsection{The group of Motions SE(2)\label{s-se2}}

The group law over the Lie group $SE(2)$ has multiplication law
$(X_{2},\theta_{2}) \cdot (X_{1},\theta_{1}) =
(X_{2}+R_{\theta_{2}}X_{1}, \theta_{1}+\theta_{2})$, where
$$
X=\left(
\begin{array}
[c]{c}
x\\
y\\
\end{array}
\right),
\quad
R_{\theta}=\left(
\begin{array}
[c]{cc}
\cos(\theta) & \sin(\theta) \!\\
\!\!-\sin(\theta) & \cos(\theta) \!\\
\end{array}
\right),
$$
$R_{\theta}$ is the rotation of angle $\theta$ in the $X$-plane.

The (strongly continuous) unitary irreducible representations of
$SE(2)$ are well known. However for analogy with the next section,
we need to recall basic facts. For a survey, see \cite{VIL, WAR}.

As representations of a semi-direct product, they can be obtained by
using Mackey's imprimitivity theorem, and therefore, they are
parametrized by the orbits of the (contragredient) action of
rotations on the $X$-plane, i.e., they are parametrized by the half
lines passing through the origin. Additionally, corresponding to the
origin, there are the characters of the rotation group $S^{1}$ that
do not count in the support of the Plancherel's measure. Finally,
the representation $\chi_{\lambda}$ corresponding to the circle of
radius $\lambda$ acts over $L^{2}(S^{1}),$ and is given by
\begin{equation*}
[\chi_{\lambda}(X,\theta) \cdot \varphi](u)=e^{i \langle
v_{\lambda},R_{u}X \rangle }\varphi(u+\theta),
\end{equation*}
where $i = \sqrt{-1}$, $v_{\lambda}$ is a vector of length $\lambda$ in $\mathbb{R}^{2}$, and
$\langle \cdot,\cdot \rangle$ denotes the standard Euclidean scalar product over $\mathbb{R}^{2}$.

Note that $SE(2)$ is far from being MAP,
since all its finite dimensional unitary irreducible representations
are given by the characters of $S^{1}$ only.

Let us also recall \cite{ABG, BDGR, DvA, DF}
that  the sub-Riemannian heat kernel on $SE(2)$, corresponding to the hypoelliptic Laplacian $\frac{1}{2}
\Delta$ defined in \eqref{contdif} is given by\footnote{in this formula $\alpha=1$}
\begin{multline}
P_{t}(X,\theta)  = \frac{1}{2} \int\limits_{0}^{+\infty}
\biggl(
\sum\limits_{n=0}^{+\infty}
e^{a_{n}^{\lambda}t} \biggl\langle \ce_{n}\biggl(\theta,\frac{\lambda^{2}}{4}\biggr), \, \chi_{\lambda}(X,\theta) \ce_{n}\biggl(\theta,\frac{\lambda^{2}}{4}\biggr) \biggr\rangle + \\
\sum\limits_{n=0}^{+\infty}
e^{b_{n}^{\lambda}t} \biggl\langle \se_{n}\biggl(\theta,\frac{\lambda^{2}}{4}\biggr), \, \chi_{\lambda}(X,\theta) \se_{n}\biggl(\theta,\frac{\lambda^{2}}{4}\biggr) \biggr\rangle \,
\lambda d\lambda,
\label{kernel1}
\end{multline}
where the functions $\se_{n}$ and $\ce_{n}$ are the $2\pi$-periodic
Mathieu sines and cosines, and
$a_{n}^{\lambda}=-\frac{\lambda^{2}}{4}-a_{n}\bigl(\frac{\lambda^{2}}{4}\bigr)$,
$b_{n}^{\lambda}=-\frac{\lambda^{2}}{4}-b_{n}\bigl(\frac{\lambda^{2}}{4}\bigr)$,
with $a_{n},b_{n},$  the characteristic values for the Mathieu
equation.

Then the solution of the diffusion equation \eqref{contdif} with the
initial condition $\psi\I_{t=0} = \psi_0$ is given by the
right-convolution formula:
\begin{equation}
\psi(t,X,\theta) = e^{t\Delta} \psi_{0}(X,\theta) =
\left(\psi_{0} \ast P_{t} \right) (X,\theta) \, = \! \int\limits_{SE(2)}
\psi_{0}(g)P_{t}\bigl(g^{-1} \! \cdot (X,\theta)\bigr)\,dg,
\label{difsolution}
\end{equation}
where $g \in SE(2)$ and $dg$ is the Haar measure: $dg = dxdyd\theta$.

\begin{remark}
In \cite{DvA,DF}, the authors reduce this expression to a simpler one involving only 4 non-periodic Mathieu functions.  
The periodic Mathieu expansion \eqref{kernel1}  is more directly related with the Fourier transformations over $SE(2)$ and $SE(2,N)$, 
the periodic Mathieu functions being eigenfunctions of the relevant Mathieu operators arising by applying the GFT  
to the original diffusion equation \eqref{contdif}. 
\end{remark}

\subsubsection{The group of discrete motions $SE(2,N)$\label{SE2N}}

In the next section, a discrete-angle version of the above diffusion
equation will appear naturally. It corresponds to the group of
motions $SE(2)$ restricted to rotations with angles
$\frac{2k\pi}{N}$. This group is denoted by $SE(2,N)$. It has very
special features: it is MAP,
and all its unitary irreducible representations are finite dimensional (it is a
Moore group, see \cite{HEY} for details), although it is not
compact. In particular, this follows from \cite[16.5.3, page~304]{DIX}: it is the semi-direct product of the compact subgroup
$K=\mathbb{Z}/N\mathbb{Z}$, and a normal subgroup $V$ isomorphic to
$\mathbb{R}^{2}$, each element of $V$ commuting with the connected
component of the identity (which in this case is $V$ itself). To
simplify, we set $R_{k}=R_{\frac{2k\pi}{N}}$.

Regarding the representations of $SE(2,N)$, using Mackey's machinery the picture is the following.
Besides the characters of $K$, the representations coming in the
support of the Plancherel measure are parametrized by $(\lambda,\nu)
\in \mathbb{R}_{\ast}^{+} \times [0, \frac{2\pi}{N}[ \, =\! \SEN,$
the \textbf{dual}\footnote{ The natural ``dual topology'' of $\SEN$
is that of a cone, that consists of considering
$\mathbb{R}_{\ast}^{+}\times [0,\frac{2\pi}{N}]$ and identifying
($\lambda,0)$ with $(\lambda,\frac{2\pi}{N})$. } of $SE(2,N)$. They
act on $\mathbb{C}^{N}$ and are given by
\begin{equation}
\chi_{\lambda,\nu}(X,r) = \diag_{k} \bigl(e^{i \langle
V_{\lambda,\nu},R_{k}X \rangle }\bigr) S^{r},
\label{repN}
\end{equation}
where $\diag_{k} \bigl(e^{i \langle V_{\lambda,\nu},R_{k}X \rangle
}\bigr)$ is the diagonal  matrix with diagonal elements $e^{i
\langle V_{\lambda,\nu},R_{k}X \rangle}$,
$V_{\lambda,\nu}=(\lambda\cos (\nu),\lambda\sin(\nu))$, $k=1,
\ldots, N$, and $S$ is the shift matrix over $\mathbb{C}^{N}$, i.e., $Se_{k}=e_{k+1}$ for $k=1, \ldots, N-1$, and $Se_{N}=e_{1}$,
where $e_1, \ldots, e_N$ are the elements of the standard basis in $\mathbb{C}^{N}$.

\subsection{The semi-discrete diffusion operator\label{s-semi-d}}

\subsubsection{Semi-discrete versus continuous}

Firstly, we show that a certain semi-discrete (discretization with respect
to the angle) model of the diffusion is compatible with the limit
continuous model. For the considerations in this section, one may
refer to the paper \cite{AHD}. The developments in this section may be considered as elementary by probabilistic readers, however they are very
instructive with respect to the nature of our semi-discrete model. Therefore we give the details.

The diffusion equation \eqref{contdif} comes from the stochastic
differential equation \eqref{sys-stoch}, where $u_{t},v_{t}$ are two
independent standard Wiener processes. It is the associated Fokker-Planck (or Kolmogorov
forward) equation to \eqref{sys-stoch}.

From the image processing point of view, integrating the diffusion
is equivalent to excite all possible admissible paths, in a
stochastic way.

\medskip

%

In fact, in the real structure of the V1 cortex,
\textbf{a finite (small) number of angles only is taken into account}. Here this number is denoted by~$N$.

Therefore, it is natural to consider the following stochastic
process with jumps evidently connected with the stochastic equation
\eqref{sys-stoch}:
\begin{equation}
dz_{t}=
\left(
\begin{array}
[c]{c}
\! dx_{t} \!\\
\! dy_{t} \! \\
\end{array}
\right)
=
\left(
\begin{array}
[c]{c}
\! \cos(\theta_{t})\! \\
\! \sin(\theta_{t})\! \\
\end{array}
\right)
dv_{t},
\label{sto-proc}
\end{equation}
in which $\theta_{t}$ is a jump process. Set
$\Lambda_{N}=(\lambda_{i,j})$, $i,j=0,\ldots,N-1$, where
$$
\lambda_{i,j}=\lim_{t\to 0} \frac{ \PP [\theta_{t}=e_{j}|\theta_{0}=e_{i}]}{t} \ \ \textrm{for} \ \   i\neq j, \quad
\lambda_{j,j} =-\sum_{i \neq j} \lambda_{i,j}.
$$
The matrix $\Lambda_{N}$ is the infinitesimal generator of the jump
process.

We assume Markov processes, where the law of the first jump time is the exponential distribution with the rate parameter $\beta>0$
(the value of $\beta$ will be specified later on).
The jump has probability $\frac{1}{2}$ on both sides. Then we get a Poisson process, and the probability of $k$ jumps between $0$ and $t$ is
$$
\PP [k \ \text{jumps}] = \frac{(\beta t)^{k}}{k!}e^{-\beta t}.
$$
So that:
\begin{align*}
\PP [\theta_{t}  &  =e_{i\pm 1}|\theta_{0}=e_{i}]=\frac{1}{2} \bigl( \beta t+kt^{2} + o(t^{2}) \bigr)e^{-\beta t},\\
\PP [\theta_{t}  &  =e_{i\pm 2}|\theta_{0}=e_{i}]=\frac{1}{4} \biggl(\frac{1}{2}\beta^{2}t^{2} + o(t^{2}) \biggr)e^{-\beta t},\\
\PP [\theta_{t}  &  =e_{i\pm n}|\theta_{0}=e_{i}]= O(t^{n})
e^{-\beta t}, \ \ n =2,3, \ldots,
\end{align*}
with the convention that $e_{i}$ is modulo $N$.

Hence $\lambda_{i,i\pm 1}=\frac{1}{2}\beta$ and
$\lambda_{i,i}=-\beta$. All other elements of the matrix
$\Lambda_{N}$ are equal to zero. Then, the infinitesimal generator
of the semi-group associated with the stochastic process
$(z_{t},\theta_{t})$ is of the form:
\begin{equation}
\mathcal{L}_{N}\Psi(z,e_{i})=(A\Psi)_{i}(z)+(\Lambda_{N}\Psi(z,e_{i}))_{i},
\label{LNPsi}
\end{equation}
where $\Psi_{j}(z)=\Psi(z,e_{j})$, $z=(x,y)$, (here we use $\Psi$ instead of $\psi_t$ to distinguish the discrete case and to have a more compact notation) and
\begin{equation}
(A\Psi)_{i}(z)  =
A\Psi(z,e_{i})=\frac{1}{2}\biggl(\cos(e_{i})\frac{\partial
}{\partial x}+\sin(e_{i})\frac{\partial}{\partial y}\biggr)^{2}
\Psi(z,e_{i}), \label{laN}
\end{equation}
\begin{equation}
(\Lambda_{N}\Psi(z,e_{i}))_{i}
=\sum_{j=0}^{n-1}\lambda_{i,j}\Psi_{j}(z)=\frac{\beta}{2}\bigl(\Psi_{i-1}(z)-2\Psi_{i}(z)+\Psi_{i+1}(z)\bigr),
\label{laN1}
\end{equation}
where the subscript $i$ means the $i$-th coordinate of the vector.

Therefore, if we set $\beta=\bigl(\frac{N}{2\pi}\bigr)^{2}$, we get
$$
(\Lambda_{N}\Psi(z,e_{i}))_{i} =
\frac{1}{2}\frac{\Psi_{i-1}(z)-2\Psi_{i}(z)+\Psi_{i+1}(z)}{\bigl(\frac{2\pi}{N}\bigr)^{2}}
= \frac{1}{2}\frac{\partial^{2}}{\partial\theta^{2}}\Psi(z,e_{i}) +
O\Bigl(\frac{1}{N}\Bigr)^2.
$$
At the limit $N \to \infty$, from formulae \eqref{LNPsi} --
\eqref{laN1} we get the second order differential operator
$$
\mathcal{L}\Psi(z,\theta)=\frac{1}{2}\biggl(\biggl(\cos(\theta)\frac{\partial}{\partial
x} +\sin(\theta)\frac{\partial}{\partial y}\biggr)^{2} +
\frac{\partial^{2}}{\partial\theta^{2}}\biggr) \Psi(z,\theta) =
\frac{1}{2} \Delta \Psi(z,\theta),
$$
that is, the operator of our diffusion process \eqref{contdif}.
However the exact Fokker-Planck equation\footnote{Note that here,
due to self adjointness, the Kolmogorov-backward equation is the
same as the Kolmogorov-forward one (Fokker-Planck).} with number of
angles $N \ll \infty$ contains the parameter~$\beta$:
\begin{multline}
\label{FKP}
\frac{d\Psi_r}{dt}(t,z) =\frac{1}{2} \biggl(\cos(e_r)\frac{\partial}{\partial x} +
\sin(e_r)\frac{\partial}{\partial y}\biggr)^{2}\Psi_r(t,z)+ \\
\frac{\beta}{2}\bigl(\Psi_{r-1}(t,z)-2\Psi_{r}(t,z)+\Psi_{r+1}(t,z)\bigr),
\quad r = 0, \ldots, N-1.
\end{multline}

\subsubsection{The semi-discrete heat kernel via the GFT\label{DK}}

The GFT (generalized non-commutative Fourier transform, see
\cite{ABG} and Appendix~\ref{APP7})
transforms our hypoelliptic diffusion
equation into a continuous sum of diffusions with elliptic
right-hand term, and the summation over $\SEN$ is with respect
to the Plancherel measure $\lambda d\lambda d\nu$. We
compute the semi-discrete heat kernel via the GFT,
just as in \cite{ABG}, and we get a similar but simpler formula than
for the ``continuous'' heat kernel~\eqref{kernel1} in the case of the
group $SE(2)$:

Formulae \eqref{gft} and \eqref{gftinv} for the
direct and the inverse GFT show that, if we set
\begin{equation}
\label{restf} \widetilde{A}_{\lambda,\nu} = \Lambda_{N} - \diag_{k}
\bigl(\lambda^{2}\cos^{2}(e_{k}-\nu)\bigr),
\end{equation}
we get the following expression for the ``semi-discrete'' (or ``jump'') heat kernel:
\begin{equation}
D_{t}(z,e_{r})= \! \int\limits_{\SEN} \! \trace
\Bigl(e^{\widetilde{A}_{\lambda,\nu}t} \cdot \diag_{k}\Bigl(e^{i
\langle V_{\lambda,\nu},R_{k}z \rangle }\Bigr) S^{r} \Bigr) \lambda
d\lambda d\nu. \label{Discrker}
\end{equation}

Note also that it is a closed formula similar to the one in the
Heisenberg case (explicit  usual functions modulo a Fourier
transform). These are the only cases we know for noncompact groups,
where the kernel is obtained with such an explicit formula.


\subsubsection{A direct way to compute the semi-discrete heat kernel\label{DKbis}}

We start from our semi-discrete heat equation~\eqref{FKP}. The heat
kernel is the convolution kernel associated with $D_{t}(z,e_{r})$,
the fundamental solution of Equation~\eqref{FKP}.

Applying the ordinary Fourier Transform with
respect to the space variable~$z$ to~\eqref{FKP}, we get the
ordinary linear differential equation
\begin{equation}
\frac{d}{dt} \widetilde{D}_t(z) = \widetilde{A}_{\lambda,\nu}
\widetilde{D}_t(z), \label{new1}
\end{equation}
where $\widetilde{D}_t(z) = \bigl( D_t(z,e_1), \ldots,
D_t(z,e_N)\bigr)$, with the initial condition obtained from the
Dirac delta function at the identity by the ordinary Fourier
Transform:
$$
\widetilde{D}_0(z) = \delta_N = (0, \ldots, 0, 1).
$$

Then, taking the inverse ordinary Fourier Transform with respect to the
space variable, we get a second expression for the heat kernel:
\begin{equation}
D_{t}(z,e_{r})= \! \int\limits_{\mathbb{R}^2} \!
\Bigl(e^{\widetilde{A}_{\lambda,\nu}t} \delta_N \Bigr)_r \, e^{i
\langle V_{\lambda,\nu},z \rangle} \lambda d\lambda d\nu.
\label{new2}
\end{equation}

\begin{remark}
 Looking at the formulae \eqref{Discrker} and \eqref{new2}, it is not clear that these expressions are identical.
The proof of this fact is given in Appendix~\ref{kerproof}.
Although less direct, we prefer formula~ \eqref{Discrker}, that reflects more the structure of $SE(2,N)$.
\end{remark}


\subsection{The weighting of the metric\label{weight}}

Consider the diffusion process
\begin{equation}
\frac{\partial \psi}{\partial t}= \frac{1}{2}\Delta_{\alpha} \psi,
\quad \Delta_{\alpha} = F^2 + \alpha G^2 =
\biggl(\cos(\theta)\frac{\partial}{\partial x}+\sin(\theta)\frac{\partial}{\partial y}\biggr)^{2} + \alpha \frac{\partial^{2}}{\partial\theta^{2}}
\label{contdif-weight}
\end{equation}
with the weighting coefficient $\alpha>0$. The hypoelliptic operator
$\Delta$ defined in \eqref{contdif} is the case $\alpha=1$. Although
the coefficient $\alpha$ is theoretically irrelevant (see Remark~\ref{remALFA}),
it plays an essential role in practice.

On the same way, for the semi-discrete operator, we have
\begin{multline*}
\Delta^{(N)} \psi_{i}(t,z) = \\
\biggl(\cos(e_{i})\frac{\partial}{\partial x}+\sin(e_{i})\frac{\partial}{\partial y}\biggr)^{2}\psi_{i}(t,z)+
\beta_N \bigl(\psi_{i-1}(t,z)-2\psi_{i}(t,z)+\psi_{i+1}(t,z)\bigr) = \\
\biggl( \biggl(\cos(e_{i})\frac{\partial}{\partial
x}+\sin(e_{i})\frac{\partial}{\partial y}\biggr)^{2} + \beta_N
\Bigl(\frac{2\pi}{N}\Bigr)^2 \frac{\partial^2}{\partial
\theta^2}\biggr) \psi_{i}(t,z) +  O\Bigl(\frac{1}{N}\Bigr)^2
\end{multline*}
as $N \to \infty$. Comparing the above formula with \eqref{FKP}, we
have the relation:
\begin{equation}
\alpha = \beta_N \Bigl(\frac{2\pi}{N}\Bigr)^2. \label{relation}
\end{equation}
It appeared clearly in all our experiments that $N=30$ is always
enough (it brings nothing visible in the experiments to take
$N>30$).

\textbf{Both parameters $N$ and $\beta_N$ have a physiological
meaning}: the number $N$ is the number of (significant) directions in our model and $\beta_N$ is a  parameter specifying the dynamics of the jumps to closest neighbour direction. Therefore, the coefficient $\alpha$ of the limit behavior
($N \to \infty$) can certainly be obtained from physiological
considerations. However we  could not find any result from neurophysiological literature in this direction.

\begin{remark} {\bf(about projectivization)}
The orientation maps of the V1 cortex show ``pinwheels'', that is,
singularities where iso-direction lines of any orientation
$\theta$ converge. Pinwheels have ``chirality'', that is, they
rotate as $2\theta$ or $-2\theta$ along a small direct loop around
them. This could be reflected in formula \eqref{kernproj} below. See Figure \ref{f-pinwheels}.

\begin{figure}[ptb]%
\centering
\includegraphics[width=200pt,height=150pt]{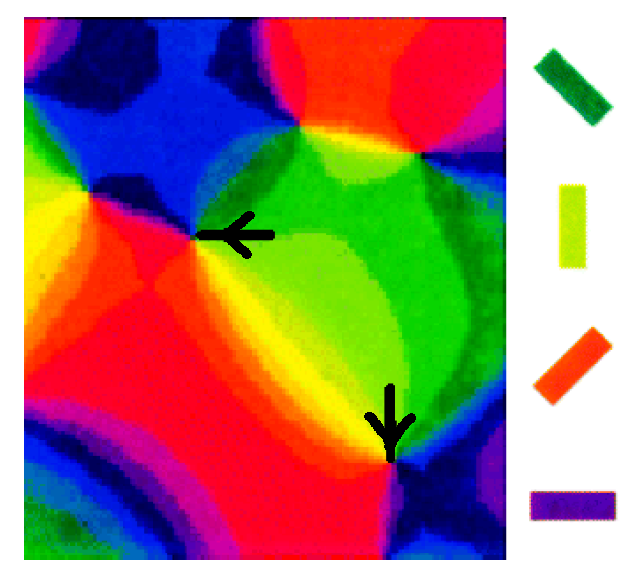}
\caption{Two pinwheels with opposite chirality.
 Each color corresponds to the sensitivity to one direction.}%
\label{f-pinwheels}%
\end{figure}

As we said, the neurons are not sensitive to the angles themselves,
but to directions only  (i.e., the angles are modulo $\pi$ and not
$2\pi$). It is the reason why we work in the projectivisation
$PT\mathbb{R}^{2}$ of the tangent bundle of $\mathbb{R}^{2}$,  in
place of $SE(2)$ itself. From the discrete point of view, it means
that if $N$ is the number of values of directions (not angles),
the angle-step is in fact $\frac{\pi}{N}$.

Also, as it was explained in \cite{BDGR}, the sub-Riemannian
structure over $PT\mathbb{R}^{2}$  itself is not trivializable. This
is not a problem for the operators $\Delta$ and $\Delta^{(N)}$,
since the functions $\cos(\theta)^{2}$, $\sin(\theta)^{2}$ and
$\sin(2\theta)$ are $\pi$-periodic. Hence, details relative
to this projectivisation are omitted in the following sections.

From the point of view of the heat kernels, in fact, if $p_{t}$
denotes the heat kernel over $PT\mathbb{R}^{2}$ (which is not a
group convolution kernel anymore, since $PT\mathbb{R}^{2}$ is not a
group), we have the formula
\begin{equation}
p_{t}((x,y,\theta),(\bar{x},\bar{y},\bar{\theta})) =
P_{t}((\bar{x},\bar{y},\bar{\theta})^{-1} \cdot (x,y,\theta)) +
P_{t}((\bar{x},\bar{y},\bar{\theta})^{-1} \cdot (x,y,\theta+\pi)),
\label{kernproj}
\end{equation}
where the inverses and products are intended in the group $SE(2)$.

A similar formula holds for the semi-discrete kernels $d_{t}$ and
$D_{t}$ from \eqref{Discrker} for even~$N$.
\end{remark}


\section{A view through almost periodic functions\label{prea}}

As we said in section \ref{SE2N}, the group $SE(2,N)$ is MAP.
It follows from the expression~\eqref{repN} of the
unitary irreducible representations that the Bohr-almost periodic
functions $f(x,y,r)$ are just those such that the functions
$f_{r}(x,y)=f(x,y,r)$, $r=1,\ldots,N$, are Bohr-almost periodic over
$\mathbb{R}^{2}$ in the usual sense. We call $AP(N)$ the set of
almost periodic functions on $SE(2,N)$, and we identify the elements
of $AP(N)$ to $\mathbb{C}^{N}$-valued functions whose components are
almost periodic over $\mathbb{R}^{2}$,  i.e., functions that are
uniform limits of trigonometric polynomials in the two variables
$(x,y)$. See Appendix \ref{map}.

These functions are dense among continuous functions over any
compact subset of $SE(2,N)$, and $AP(N)$ is a good candidate for the
space of solutions of our heat equation: exactly as for the usual
heat equation (see \cite[pp.~144--146]{CGB} for instance), the
uniformly bounded solutions of our heat equation with initial
conditions in $AP(N)$ remain almost periodic over $SE(2,N)$
uniformly in time.

Functions coming from standard images have support in a bounded subset of $SE(2,N)$.
They can be (after smoothing and lifting)
approximated uniformly over this bounded subset by trigonometric
$\mathbb{C}^{N}$-valued polynomials $Q(x,y)$ with components $Q_{r}(x,y)$:
\begin{equation}
Q_{r}(x,y)=
\sum\limits_{s=0}^{k} a_{r,\lambda_{s},\mu_{s}}e^{i(\lambda_{s}x+\mu_{s}y)},
\quad r = 0, \ldots, N-1,
\label{pol}
\end{equation}
where $i = \sqrt{-1}$.
The vector space of such $\mathbb{C}^{N}$-valued polynomials, for a
fixed finite number of distinct values of $\lambda_{s},\mu_{s}$,
i.e., $\omega_{s} = (\lambda_{s},\mu_{s})\in K$, a fixed finite subset
of $\mathbb{R}^{2}$, is denoted by $SE(2,N,K)$.

A straightforward computation shows that semi-discrete hypoelliptic equation
\eqref{FKP} restricts to $SE(2,N,K)$. It becomes
\begin{multline}
\frac{da_{r}^{s}}{dt}=-\frac{1}{2} \biggl(
\lambda_{s}\cos(\theta_{r})+\mu_{s}\sin(\theta_{r})\biggr)^{2}a_{r}^{s} +
\frac{\beta_N}{2} \biggl(a_{r+1}^{s}-2a_{r}^{s} +a_{r-1}^{s}\biggr), \\
\quad r = 0, \ldots, N-1,
\label{redap}
\end{multline}
or, equivalently,
\begin{equation}
\frac{dA^{s}}{dt}=-\frac{1}{2}\diag_{k}\bigl(\lambda_{s}\cos(\theta_{k})+
\mu_{s}\sin(\theta_{k})\bigr)^{2}A^{s} + \Lambda_{N}A^{s},
\label{elem}
\end{equation}
where $A^{s}(t)$ is the vector in $\mathbb{C}^{N}$ with components $a^s_1(t), \ldots, a^s_N(t)$, which are the coefficients of the solution of the restriction to $SE(2,N,K)$ of the semi-discrete equation.
The matrix $\Lambda_{N}$ is the infinitesimal generator of the jump process
with parameter $\beta_N$. The system of differential equations
\eqref{redap} is equipped with the initial condition
$a_{r}^{s}(0) = a_{r,\lambda_{s},\mu_{s}}$ with $a_{r,\lambda_{s},\mu_{s}}$ from~\eqref{pol}.
These last two formulas are equivalent to the following Theorem.

\begin{theorem}
\label{th1} For any almost periodic polynomial initial condition in
$SE(2,N,K)$, the integration of the diffusion equation reduces to
solving a finite set of independent linear ordinary differential
equations of dimension $N$. Any continuous initial condition over a
compact subset of $SE(2,N)$ can be uniformly approximated by such a polynomial.
\end{theorem}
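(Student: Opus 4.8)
The plan is to seek the solution of \eqref{FKP} \emph{inside} the finite-dimensional space $SE(2,N,K)$ itself, by promoting the coefficients of the polynomial \eqref{pol} to functions of time. Concretely, I would make the ansatz
$$\Psi_r(t,x,y) = \sum_{s=0}^{k} a_r^s(t)\, e^{i(\lambda_s x + \mu_s y)}, \qquad r = 0, \ldots, N-1,$$
with $a_r^s(0) = a_{r,\lambda_s,\mu_s}$, substitute it into \eqref{FKP}, and show that the resulting equations close up on the coefficients $a_r^s(t)$. The whole argument rests on one elementary observation: each planar exponential $e^{i(\lambda_s x + \mu_s y)}$ is an eigenfunction of the spatial part of the operator, the discrete directions $\theta_r$ entering only through a scalar.

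First I would record the identity
$$\biggl(\cos(\theta_r)\frac{\partial}{\partial x} + \sin(\theta_r)\frac{\partial}{\partial y}\biggr)^2 e^{i(\lambda_s x + \mu_s y)} = -\bigl(\lambda_s \cos(\theta_r) + \mu_s \sin(\theta_r)\bigr)^2 e^{i(\lambda_s x + \mu_s y)},$$
so that the second-order term acts as multiplication by a scalar depending only on $(r,s)$ and never produces a new spatial frequency, while the jump term $\tfrac{\beta_N}{2}(\Psi_{r-1} - 2\Psi_r + \Psi_{r+1})$ acts only on the angular index $r$ and is likewise diagonal in $s$. Hence the operator leaves $SE(2,N,K)$ invariant. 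Since the finitely many exponentials attached to \emph{distinct} frequencies $\omega_s = (\lambda_s,\mu_s)$ are linearly independent, I can match the coefficient of each $e^{i(\lambda_s x + \mu_s y)}$ separately; this yields exactly \eqref{redap}, i.e. for every fixed $s$ an autonomous linear system of $N$ equations in $a_0^s, \ldots, a_{N-1}^s$, with no coupling between different $s$. In vector form this is \eqref{elem}, whose solution is the matrix exponential $A^s(t) = e^{t M_s} A^s(0)$, where $M_s$ is the matrix on its right-hand side; up to the overall normalisation of the diffusion, $M_s$ is the operator $\widetilde{A}_{\lambda_s,\nu_s}$ of \eqref{restf} written for $(\lambda_s,\mu_s)$ in polar coordinates. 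Conversely, reassembling these solutions gives a genuine solution of \eqref{FKP} in $SE(2,N,K)$, and standard well-posedness of finite-dimensional linear ODEs makes it the unique one with the prescribed data, which closes the reduction. One thus obtains $k+1$ independent $N$-dimensional linear systems whose number is governed by the chosen frequency set $K$ and not by the size of the image.

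For the second assertion I would argue componentwise. A continuous initial condition over a compact subset of $SE(2,N) = \mathbb{R}^2 \times \mathbb{Z}/N\mathbb{Z}$ is just an $N$-tuple of continuous functions on a compact subset of $\mathbb{R}^2$, the angular factor being finite. Each component is uniformly approximable by trigonometric polynomials in $(x,y)$: these form a subalgebra of the continuous functions containing the constants, closed under complex conjugation and separating points, so the Stone--Weierstrass theorem applies. Taking $K$ to be the finite union of the frequencies occurring in the $N$ approximants and collecting them into a single $\mathbb{C}^N$-valued polynomial of the form \eqref{pol} gives the desired approximation in $SE(2,N,K)$, uniformly over the compact set.

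There is no deep obstacle here: the reduction is a direct substitution, and its only genuine content is the invariance of $SE(2,N,K)$ under the operator, which follows immediately from the eigenfunction identity above together with the fact that the jump term acts diagonally in the spatial frequency. The one point that invokes a named result rather than a bare computation is the density statement, for which Stone--Weierstrass is precisely what is needed. I would expect the mild bookkeeping of reconciling the factors of $\tfrac12$ and $\beta_N$ between \eqref{redap} and the generator $\Lambda_N$ in \eqref{elem} to be the most error-prone, though purely clerical, part of the write-up.
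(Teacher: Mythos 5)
Your proof is correct and follows essentially the same route as the paper: the paper, too, obtains the theorem by noting that substituting the ansatz \eqref{pol} with time-dependent coefficients into \eqref{FKP} shows the equation restricts to $SE(2,N,K)$ (the exponentials being eigenfunctions of the spatial operator and the jump term acting diagonally in the frequency index), which is precisely the derivation of \eqref{redap}/\eqref{elem} as one decoupled $N$-dimensional linear system per frequency $\omega_s\in K$. The only minor difference is in the density claim, where the paper invokes the general theory of almost periodic functions on MAP groups (Appendix~\ref{map}) while you prove the same fact directly and componentwise via Stone--Weierstrass, a perfectly adequate self-contained substitute.
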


Differential equations \eqref{redap} over $\mathbb{C}^{N}$ are not
hard to tract numerically. They have an elliptic right hand term,
and the Crank-Nicolson method (discussed in the next section) is
recommended.

\medskip

In fact, $SE(2,N,K)$ is the space of a unitary representation of
$SE(2,N)$ which is not irreducible but splits into the direct sum of
irreducible representations:
$$
SE(2,N,K) = \bigoplus_{\omega\in K}SE(2,N,\{\omega\}).
$$

This fact suggests that we can use our knowledge of the dual $\SEN$
to reduce the computations.

\begin{theorem}
In Theorem \ref{th1}, if some points of $K$ can be deduced one from
the other by elementary rotations $R_{r}$ then it is enough to
compute the resolvents corresponding to a single among these points.
\end{theorem}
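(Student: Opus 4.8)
The plan is to realize the evolution matrix of \eqref{elem} at a rotated frequency as an explicit conjugate, by the shift matrix $S$ of \eqref{repN}, of the matrix at the original frequency. Since conjugation by $S$ is nothing but a cyclic relabelling of the $N$ coordinates, this collapses each orbit of $K$ under elementary rotations to a single resolvent computation, which is exactly the assertion.

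First I would record the matrix
\[
M(\omega)=-\tfrac12\diag_{k}\bigl(\lambda\cos\theta_k+\mu\sin\theta_k\bigr)^2+\Lambda_N,\qquad \omega=(\lambda,\mu)\in K,
\]
that drives \eqref{elem}, and write its $k$-th diagonal entry as $d_k(\omega)=-\tfrac12\langle\omega,(\cos\theta_k,\sin\theta_k)\rangle^2$. The essential point is arithmetic: the discrete angles $\theta_k=\frac{2k\pi}{N}$ form exactly the grid preserved by an elementary rotation $R_r=R_{2r\pi/N}$. Using $R_r^{\mathsf T}=R_{-r}$ together with the addition formulas, $\langle R_r\omega,(\cos\theta_k,\sin\theta_k)\rangle=\langle\omega,(\cos\theta_{k+r},\sin\theta_{k+r})\rangle$, hence $d_k(R_r\omega)=d_{k+r}(\omega)$: the diagonal part is merely cyclically permuted. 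In matrix form, $\diag_k\bigl(d_k(R_r\omega)\bigr)=S^{-r}\,\diag_k\bigl(d_k(\omega)\bigr)\,S^{r}$. This commensurability of the angular grid with the rotations is what makes the permutation exact rather than approximate.

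The structural ingredient is that $\Lambda_N$ is circulant: by \eqref{laN1} one has $\Lambda_N=\frac{\beta_N}{2}(S+S^{-1}-2\,\Id)$, so $\Lambda_N$ commutes with every power of $S$. Combining the two facts,
\[
M(R_r\omega)=S^{-r}\diag_k\bigl(d_k(\omega)\bigr)S^{r}+\Lambda_N=S^{-r}\bigl(\diag_k(d_k(\omega))+\Lambda_N\bigr)S^{r}=S^{-r}M(\omega)S^{r},
\]
so $M(R_r\omega)$ and $M(\omega)$ are conjugate by the unitary $S^{r}$. Consequently their propagators and resolvents transform by the same conjugation,
\[
e^{tM(R_r\omega)}=S^{-r}e^{tM(\omega)}S^{r},\qquad \bigl(s\,\Id-M(R_r\omega)\bigr)^{-1}=S^{-r}\bigl(s\,\Id-M(\omega)\bigr)^{-1}S^{r}.
\]
Thus once the resolvent is known at one point of a rotation orbit, it is obtained at every other point by a cyclic shift of indices, which proves the statement. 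Conceptually this is just the fact that frequencies in the same $\mathbb{Z}/N$-orbit index unitarily equivalent representations \eqref{repN}, with $S^{r}$ playing the role of intertwiner.

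The main obstacle is not analytic but a matter of bookkeeping: pinning down the sign of the index shift consistently with the paper's conventions — the orientation of $R_\theta$ (which rotates vectors clockwise) against the direction $Se_k=e_{k+1}$ of the shift — so that the conjugation comes out as $S^{-r}(\cdot)\,S^{r}$ rather than its inverse. Once that convention is fixed, the commutation $[\Lambda_N,S]=0$ and the exactness of the angular permutation deliver the result immediately.
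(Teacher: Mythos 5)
Your proposal is correct and takes essentially the same route as the paper: the paper's one-line check that $S^{r}A'$ satisfies Equation \eqref{elem} at the unrotated frequency is precisely your conjugation identity $M(R_{r}\omega)=S^{-r}M(\omega)S^{r}$, which you simply unpack in more detail (the exact cyclic permutation of the diagonal entries, plus the fact that $\Lambda_{N}$ is circulant and hence commutes with $S$). The care you take over the sign of the index shift is warranted given the paper's clockwise convention for $R_{\theta}$, but it does not change the substance of the argument.
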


\begin{proof}

Let $A^{s}$ be the solution of Equation \eqref{elem}
in $SE(2,N,\{\omega\})$ for some $\omega\in K$ and
$A'$ be the solution of the same equation in $SE(2,N,\{\omega'\})$ for $\omega' \in K$
such that $\omega' = R_{r}\omega$.
It is easy to check that $S^{r}A'$ also satisfies Equation \eqref{elem}:
\begin{equation*}
\frac{dS^{r}A'}{dt} =
-\frac{1}{2}\diag_{k}\bigl(\lambda_{s}\cos(\theta_{k})+\mu_{s}\sin(\theta_{k})\bigr)^{2}S^{r}A' +
\Lambda_{N}S^{r}A'.
\end{equation*}
So that it is not necessary to compute all the resolvents relative
to each unitary irreducible representation. It is enough to do it
for those corresponding to $\omega\in \SEN$ only.
\end{proof}

This method could be very efficient in a general setting. The last
considerations show that it is of numerical interest to put in $K$
as much as possible points in the same orbits under the elementary
rotations.

In our vision problem, the initial conditions are certainly very far from being almost periodic 
due to the lifting procedure, which lifts to a distribution. 
There are at least two ways to overcome this obstacle. 
The first way is to modify the lifting procedure, which is out of the scope of this paper. 
Here we have chosen another method based upon the spatial discretization, presented in the next section. 


\section{Numerical treatment of the diffusion equation\label{algo}}

In fact, images being given under the guise of a square table of
real values (the grey levels), we have chosen to deal with periodic
images over a basic rectangle, and to discretize with respect to the
naturally discrete $(x,y)$ variables. We take a mesh of $M_{x}\times
M_{y}$ points on the $(x,y)$-plane, and the number of angles is $N$.

\begin{remark}
In all the results we show, $M_{x}=M_{y}=M=256$,  and
the number of angles $N \approx 30$. \textbf{We don't see any
significant improvement for larger $N$}.
\end{remark}

Therefore, a function $\psi(x,y,\theta)$ over $SE(2,N)$ is
approximated by a $M^{2}\times N$ table $(\psi_{k,l}^r)$,
$r=1,\ldots,N$, $k,l=1,\ldots,M$. Hence, without loss of generality,
we set for the discretization steps $\Delta {x} = \Delta {y} =
\sqrt{M}$ and the mesh points are $x_{k}=\frac{k-1}{\sqrt{M}}$,
$y_{l}=\frac{l-1}{\sqrt{M}}$. In this section, due to the
periodicity, the upper index takes natural values modulo $N$ and the
lower indices take natural values modulo~$M$.

Remind that the semi-discrete diffusion equation over $SE(2,N)$ is~\eqref{FKP}. For numerical solution of this equation we replace the differential operators $\frac{\partial}{\partial x},\frac{\partial}{\partial y}$ by the discrete operators $D_x,D_y$, which act on $\mathbb{C}^{M^{2}}$:
\begin{align*}
D_{x}(\psi^r_{k,l})=\frac{\psi_{k+1,l}^r-\psi_{k-1,l}^r}{x_{k+1}-x_{k-1}}=
\frac{\sqrt{M}}{2}\bigl(\psi_{k+1,l}^r-\psi_{k-1,l}^r\bigr),  \\
D_{y}(\psi^r_{k,l})=\frac{\psi_{k,l+1}^r-\psi_{k,l-1}^r}{y_{l+1}-y_{l-1}}=
\frac{\sqrt{M}}{2}\bigl(\psi_{k,l+1}^r-\psi_{k,l-1}^r\bigr).
\end{align*}

\begin{remark}
Notice that this space discretisation has already been proposed in \cite{F1,FD} with numerical improvement that are out of the scope of this paper.
Moreover, the following resulting algorithm is closely related to previous work in \cite{DvA}.
\end{remark}

Then we get the diffusion equation with totally discretized
(i.e., with discretized space and angle variables) operator $D$,
which acts on $\mathbb{C}^{N}\otimes\mathbb{C}^{M^{2}}\simeq
\mathbb{C}^{N \cdot M^{2}}$:
\begin{multline}
\frac{d \psi_{k,l}^r}{dt} = \frac{1}{2} D(\psi_{k,l}^r), \quad \textrm{where}  \\
D(\psi_{k,l}^r) =
\bigl(\cos(e_r)D_{x}+\sin(e_r)D_{y}\bigr)^{2}(\psi_{k,l}^r) + \beta_N
(\psi_{k,l}^{r-1} -2\psi_{k,l}^{r} +\psi_{k,l}^{r+1}).
\label{FTE}
\end{multline}
The initial condition for \eqref{FTE} is the discrete analog of the
function $\ff(x,y,\theta)$ obtained after the lift of the original
image $f(x,y)$.

\begin{remark}
\label{number}For $M=256$, $N=30$ \eqref{FTE} is a fully coupled
linear differential equation in $\mathbb{R}^{K}$, $K=1,996,080$.
Applying an implicit or semi-implicit finite difference scheme, one
need to solve a system of $K^2 \approx 3.6\times 10^{12}$ linear
algebraic equations.
\end{remark}


\begin{figure}[h!]
\begin{center}
\includegraphics[width=320pt,height=315pt]{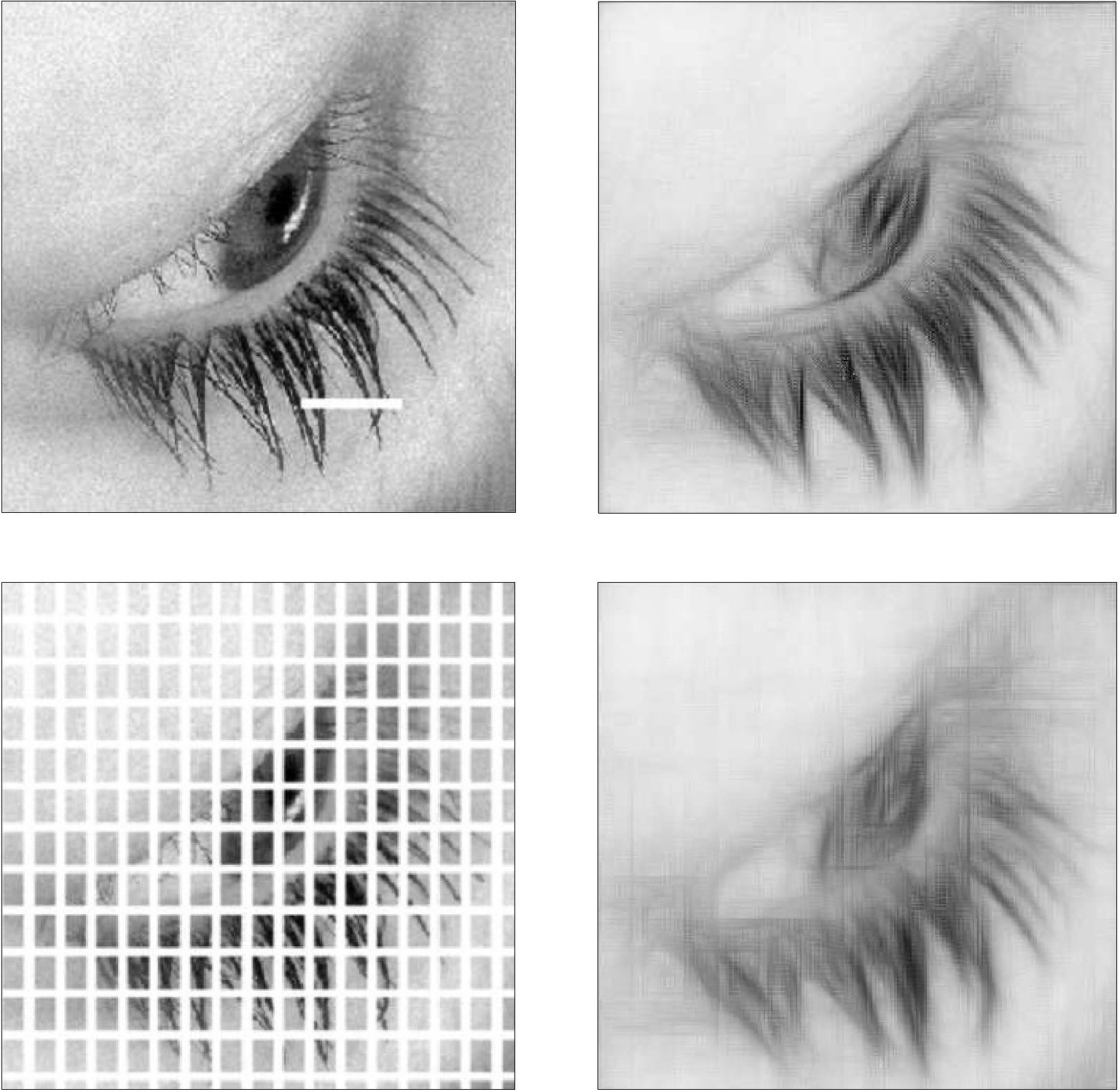}
\caption{
The initial corrupted images (left) and the images reconstructed via the hypoelliptic diffusion 
\eqref{contdif-weight} 
with $\alpha=0.25$ and time $T=0.15$ (right, up), $T=0.45$ (right, down). 
}\label{Fig1}
\end{center}
\end{figure}

\begin{theorem}
\label{th2} The solution of Equation \eqref{FTE} can be computed
\textbf{exactly} by solving in parallel $M^2$ linear differential
equations in dimension~$N$.
\end{theorem}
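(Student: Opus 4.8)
The plan is to diagonalize the spatial part of the totally discretized operator by the two-dimensional discrete Fourier transform (DFT) over the grid $(\mathbb{Z}/M\mathbb{Z})^2$, in exact analogy with the way the GFT diagonalized the continuous problem, and thereby to block-diagonalize the right-hand side of \eqref{FTE} into $M^2$ independent blocks of size $N$.

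First I would record that $D_x$ and $D_y$ are invariant under spatial translations. Writing $S_x,S_y$ for the cyclic shifts $\psi_{k,l}\mapsto\psi_{k+1,l}$ and $\psi_{k,l}\mapsto\psi_{k,l+1}$ (indices modulo $M$), one has $D_x=\tfrac{\sqrt M}{2}(S_x-S_x^{-1})$ and $D_y=\tfrac{\sqrt M}{2}(S_y-S_y^{-1})$, so both are circulant, i.e. polynomials in the commuting unitaries $S_x,S_y$. Hence the DFT $\mathcal F=I_N\otimes\mathcal F_{M^2}$, acting only on the spatial indices, simultaneously diagonalizes them: on the frequency $(p,q)$ the shifts $S_x,S_y$ act as multiplication by $e^{2\pi ip/M},e^{2\pi iq/M}$, so $D_x,D_y$ act by the symbols $i\sqrt M\sin(2\pi p/M)$ and $i\sqrt M\sin(2\pi q/M)$.

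Next I would write the generator $\mathcal G=\tfrac12 D$ of \eqref{FTE} in tensor form. For fixed angle index $r$ the spatial operator $L_r=(\cos(e_r)D_x+\sin(e_r)D_y)^2$ is again circulant, while the jump term acts only on the $\mathbb C^N$ factor; thus
\[
\mathcal G=\tfrac12\sum_{r}P_r\otimes L_r+\Lambda_N\otimes I_{M^2},
\]
where $P_r$ projects onto the $r$-th angular coordinate and $\Lambda_N$ is the $N\times N$ jump matrix. Conjugating by $\mathcal F$ replaces each $L_r$ by $\diag_{(p,q)}(\ell_r(p,q))$ with $\ell_r(p,q)=-M(\cos(e_r)\sin(2\pi p/M)+\sin(e_r)\sin(2\pi q/M))^2$, and leaves $\Lambda_N\otimes I_{M^2}$ untouched since it is the identity in space. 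Regrouping by spatial frequency, $\mathcal F\mathcal G\mathcal F^{-1}$ is block-diagonal, the $(p,q)$-block being the $N\times N$ matrix
\[
B_{p,q}=-\tfrac M2\diag_{r}\bigl((\cos(e_r)\sin(2\pi p/M)+\sin(e_r)\sin(2\pi q/M))^2\bigr)+\Lambda_N,
\]
which is exactly a matrix of the type \eqref{elem}, with the discrete symbols $\bigl(\sqrt M\sin(2\pi p/M),\sqrt M\sin(2\pi q/M)\bigr)$ playing the role of $(\lambda_s,\mu_s)$.

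Since $\mathcal F$ is a linear isomorphism that commutes with the time evolution, solving \eqref{FTE} is exactly equivalent to transforming the (lifted, discretized) initial datum, integrating the $M^2$ uncoupled $N$-dimensional systems $\tfrac{d}{dt}\hat\psi_{(p,q)}=B_{p,q}\hat\psi_{(p,q)}$ — each given by the matrix exponential $e^{tB_{p,q}}$ and solvable independently, hence in parallel — and applying the inverse transform. No genuine obstacle is expected: the whole argument rests on $D_x,D_y$ being polynomials in the commuting shifts, so nothing non-commutative survives in space and the DFT diagonalizes them simultaneously. The only point requiring care is bookkeeping — checking that the $r$-dependence of the coefficients $\cos(e_r),\sin(e_r)$ does not spoil the spatial diagonalization (it does not, since for each fixed $r$ the operator $L_r$ is still circulant) and that $\Lambda_N$, acting as the identity in space, passes through $\mathcal F$ unchanged.
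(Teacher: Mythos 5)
Your proposal is correct and follows essentially the same route as the paper: both apply the discrete Fourier transform over $(\mathbb{Z}/M\mathbb{Z})^2$ to the spatial variables, observe that $\cos(e_r)D_x+\sin(e_r)D_y$ becomes multiplication by $i\sqrt{M}\,a^r_{k,l}$ while the angular jump term $\Lambda_N$ is untouched, and thereby decouple \eqref{FTE} into $M^2$ independent $N$-dimensional linear systems solved by matrix exponentials and inverted by the inverse FFT. Your circulant/tensor-product framing is merely a more structural phrasing of the paper's direct symbol computation, and your blocks $B_{p,q}$ coincide with the paper's $A_{k,l}$ up to the index-offset and $\Lambda_N$-normalization conventions.
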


\begin{proof}
As previously, it is natural to apply the Fourier transform over the
abelian group ($\mathbb{Z}/M\mathbb{Z)}^{2}$, which can be computed
\textbf{exactly} by the standard FFT (Fast Fourier Transform) algorithm.
Let us denote by $\widehat{\psi}$ the Fourier transform of~$\psi$:
$$
\widehat{\psi}_{k,l}^{r}=\frac{1}{M}\sum_{p,q=1}^{M}\psi_{p,q}^{r}
\exp \biggl(-2\pi i\biggl(\frac{(k-1)(p-1)}{M}+\frac{(l-1)(q-1)}{M}\biggr)\biggr).
$$

A straightforward computation shows that
$$
\widehat{D_{x}\bigl(\psi_{k,l}^{r}\bigr)} =
i\sqrt{M}\sin\biggl(2\pi\frac{k-1}{M}\biggr)\widehat{\psi}_{k,l}^{r},
\quad \widehat{D_{y}\bigl(\psi_{k,l}^{r}\bigr)} =
i\sqrt{M}\sin\biggl(2\pi\frac{l-1}{M}\biggr)\widehat{\psi}_{k,l}^{r},
$$
and the Fourier transform maps the operator
$\cos(e_r)D_{x}+\sin(e_r)D_{y}$ to the multiplication operator
$\widehat{\psi}_{k,l}^{r} \,\to \,i\sqrt{M}\,a_{k,l}^{r} \widehat{\psi}_{k,l}^{r}$,
where
$$
a_{k,l}^{r} = \cos(e_r)\sin\biggl(2\pi\frac{k-1}{M}\biggr) +
\sin(e_r)\sin\biggl(2\pi\frac{l-1}{M}\biggr).
$$

Hence, the diffusion equation \eqref{FTE} is mapped to the following
completely decoupled system of $M^{2}$ linear differential equations
over $\mathbb{C}^{N}$:
\begin{equation}
\frac{d\widehat{\psi}_{k,l}}{dt} = A_{k,l} \,\widehat{\psi}_{k,l},  \quad
\widehat{\psi}_{k,l} = (\widehat{\psi}_{k,l}^1, \ldots, \widehat{\psi}_{k,l}^N)^*,
\label{deceq}
\end{equation}
where $A_{k,l} = \frac{1}{2} \bigl(\Lambda_{N} - M \diag_r (a_{k,l}^r)^{2}\bigr)$, $r = 1, \ldots, N$,
and $*$ means the transpose operation.
Due to periodicity in $r$, $\Lambda_{N}$ is an almost tridiagonal $N \times N$ matrix:
it contains two extra non-zero elements in the right-up and left-low corners.

Therefore the solution of \eqref{deceq} is
\begin{equation}
\widehat{\psi}_{k,l}(t) = e^{t A_{k,l}} \widehat{\psi}_{k,l}(0),
\label{soleq}
\end{equation}
where the initial function $\widehat {\psi}_{k,l}(0)$ is known. Finally,
the solution of \eqref{FTE} is the inverse Fourier transform over
($\mathbb{Z}/M\mathbb{Z)}^{2}$ of $\widehat{\psi}_{k,l}^{r}(t)$,
obtained using the inverse FFT. \end{proof}

For instance, for $M=256$ and $N=30$, this is solving $65536$ linear
differential equations in dimension~$30$.

\begin{remark}
1.\ The complexity of the FFT used twice in our algorithm (including
the inverse transform) is negligible in front of the complexity of
the numerical integration of the decoupled linear differential
equations.

2.\  The discretized diffusion~\eqref{FTE} can be
numerically integrated with the semi-explicit Crank-Nicolson scheme.
The convergence of this scheme for the considered class of equations
and some estimations are well known; see e.g.,~\cite[chapter~5]{MAR}.
Note that the matrices $A_{k,l}$ are tridiagonal plus two terms in
the right-up and left-low corners (coming from $\Lambda_{N}$ due to
periodicity). An effective algorithm for solving such linear system
is suggested in~\cite{AA}.

3.\ Formula \eqref{soleq} for solutions of the
linear differential equations \eqref{deceq} has an obvious
advantage. Once $M,N$ and $\alpha$ (or, equivalently, $\beta_N$) are
fixed, the matrices $A_{k,l}$ \textbf{are universal}, and therefore
their exponentials can be computed once for all. Moreover, using a time step
$\tau$ and the formula $e^{n\tau A_{k,l}} = \bigl(e^{\tau
A_{k,l}}\bigr)^n$, it is enough to compute only the exponential
$e^{\tau A_{k,l}}$.

4.\ In fact, it is not necessary to compute $M^{2}$ exponentials of
matrices: it is enough to compute them at the points $k,l$ that fall
in the dual space $\SEN$. This is much less. For large $N,$ is
number is of order $M$ in place of $M^{2}$. Here, the structure of
$SE(2,N)$ is crucial, and specially formula~\eqref{Discrker}.
\end{remark}


\section{Heuristic complements \label{heursec}}

It has to be noticed that:
\begin{itemize}
\item The treatment of images up to now is essentially global.
\item It is not necessary to know where the image is corrupted.
\end{itemize}
Presumably, the visual cortex V1 is also able in some cases to
detect that the image is corrupted at some place, and to take this
a-priori knowledge into account.

We tried to investigate methods that would improve on our algorithm
in this direction. These methods are based upon the general idea of
distinguishing between the so called ``good'' and ``bad'' points (pixels) of the
image under reconstruction.

Roughly speaking, good points are points that are already reconstructed enough (including non-corrupted points), and we would like to preserve them from the diffusion or at least to weaken the effect of the the diffusion at these points, while on the set of bad points the diffusion should proceed normally. Exact definitions will be given below.
In this section, we present two heuristic methods:
\begin{itemize}
\item
Static restoration method, where the sets of good  and bad points do
not change along the treatment, that is, they coincide with corrupted and non-corrupted points respectively.
\item
Dynamic restoration method, where the sets of bad and good points coincide with corrupted and non-corrupted points respectively only at the initial moment, but along the treatment some bad points may become good, so that the sets of good and bad points change.
\end{itemize}

A very similar discussion has raised in the adaptation in the conductivity matrix in \cite{DF1,FD,ACHA}.

\subsection{Restoration: downstairs or upstairs?}

One natural idea would be to iterate the following steps:
\begin{itemize}
\item
Lift the plane image to $PT\mathbb{R}^{2}$.
\item
Integrate the diffusion equation for some fixed small $\tau$.
\item
Project the solution down to the plane.
\item
Restore the non-corrupted part of the initial image.
\end{itemize}

In practice, this idea does not work at all. The main reason is that
the diffusion acts on both the corrupted and the non-corrupted
parts, and there is not good coincidence at the frontier after the
restoration.

All variations of the previous idea, at the level of the plane image
do not provide acceptable results. From what we conclude that it is
necessary to proceed the restoration of the the non-corrupted part
at the level of the lifted image, i.e., on the bundle $PT\mathbb{R}^{2}$.

\subsection{Static restoration (SR)}


\begin{figure}[h!]
\begin{center}
\includegraphics[width=330pt,height=320pt]{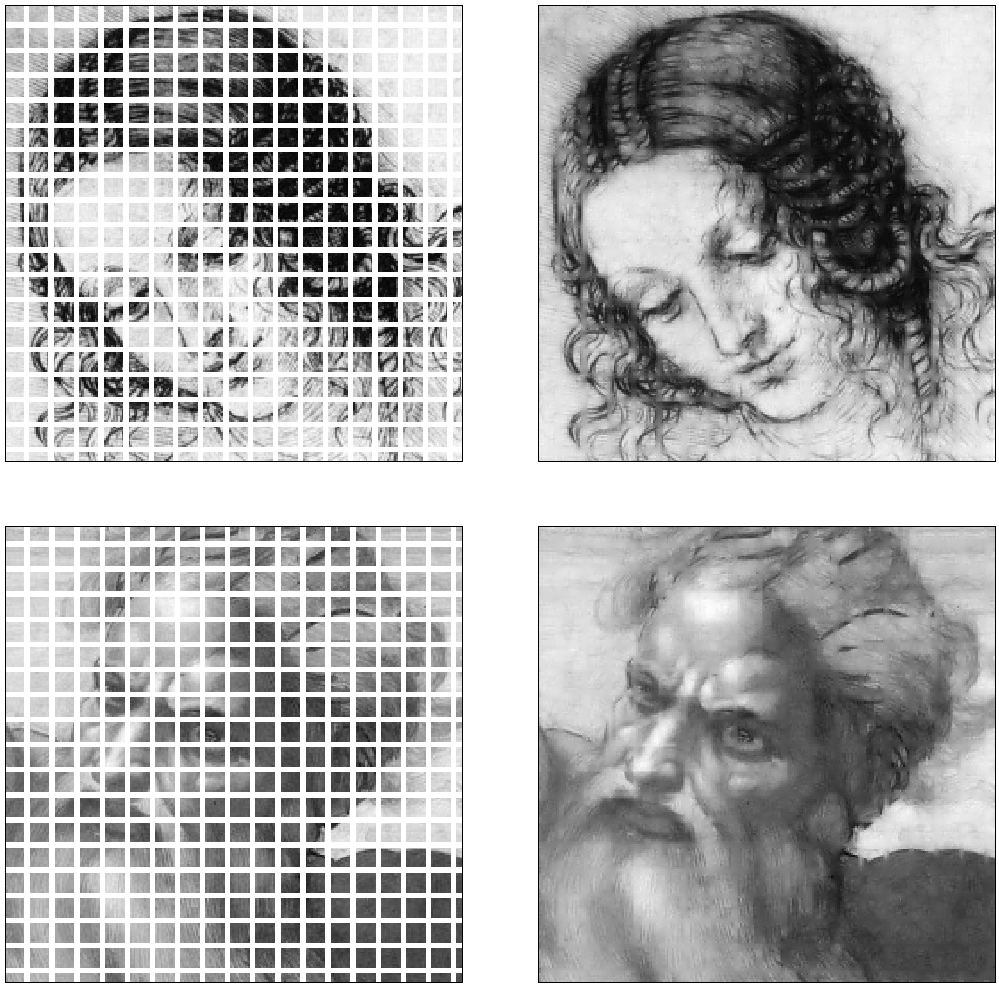}
\caption{The initial corrupted images (left) and the images reconstructed via the hypoelliptic diffusion 
\eqref{contdif-weight} 
and the SR procedure with parameters from Table~1 (right). 
}\label{Fig2}
\end{center}
\end{figure}

Assume that points $(x,y)$ of the image are separated into the set
$G$ of good (non-corrupted) and the set $B$ of bad (corrupted)
points, $f(x,y)=0$ for all $(x,y) \in B$ and $f(x,y)>0$ for all
$(x,y) \in G$. The idea of the restoration procedure is to ``mix''
the solution $\psi(x,y,\theta,t)$ of the diffusion equation with the
initial function $\psi(x,y,\theta,0)=\ff(x,y,\theta)$ at each point
$(x,y) \in G$.

The ``mixing'' is fulfilled many times during the integration of our
equation. Namely, split the segment $[0,T]$ into $n$ small intervals
with the mesh points $t_i = i\tau$, $\tau = T/n$, $i=0,1,\ldots,n$,
and proceed the integration of our equation on each
$[t_{i},t_{i+1}]$ with the initial condition
\begin{equation*}
\psi\I_{t=t_{i}} = \left \{
\begin{aligned}
\psi(x,y,\theta,t_{i}), \ \  &\textrm{if} \ \  (x,y) \in B,   \\
\sigma(x,y,t_{i})\,\psi(x,y,\theta,t_{i}), \ \  &\textrm{if} \ \  (x,y) \in G,   \\
\end{aligned}
\right.
\end{equation*}
where the function $\psi(x,y,\theta,t_{i})$ is computed after
integration on the previous interval, and the factor
$\sigma(x,y,t_{i})$ is defined by
\begin{equation*}
\sigma(x,y,t_{i}) = \frac{\epsilon h(x,y,0) + (1-\epsilon)
h(x,y,t_{i})}{h(x,y,t_{i})}, \ \quad h(x,y,t) = \max_{\theta}
\psi(x,y,\theta,t),
\end{equation*}
$0 \le \epsilon \le 1$. After that we obtain the function
$\psi(x,y,\theta,t_{i+1})$ and repeat the procedure on the next
interval.

This procedure essentially depends on two parameters that can be
chosen experimentally: the natural number $n$ (number of treatments)
and the coefficient $\epsilon$, which defines the strength of each
treatment.

\subsection{Dynamic restoration (DR)}

There is a defect in the procedure described above: different
corrupted parts of the image have different velocities of
reconstruction.
This effect can cause serious defects.


\begin{figure}[h!]
\begin{center}
\includegraphics[width=330pt,height=320pt]{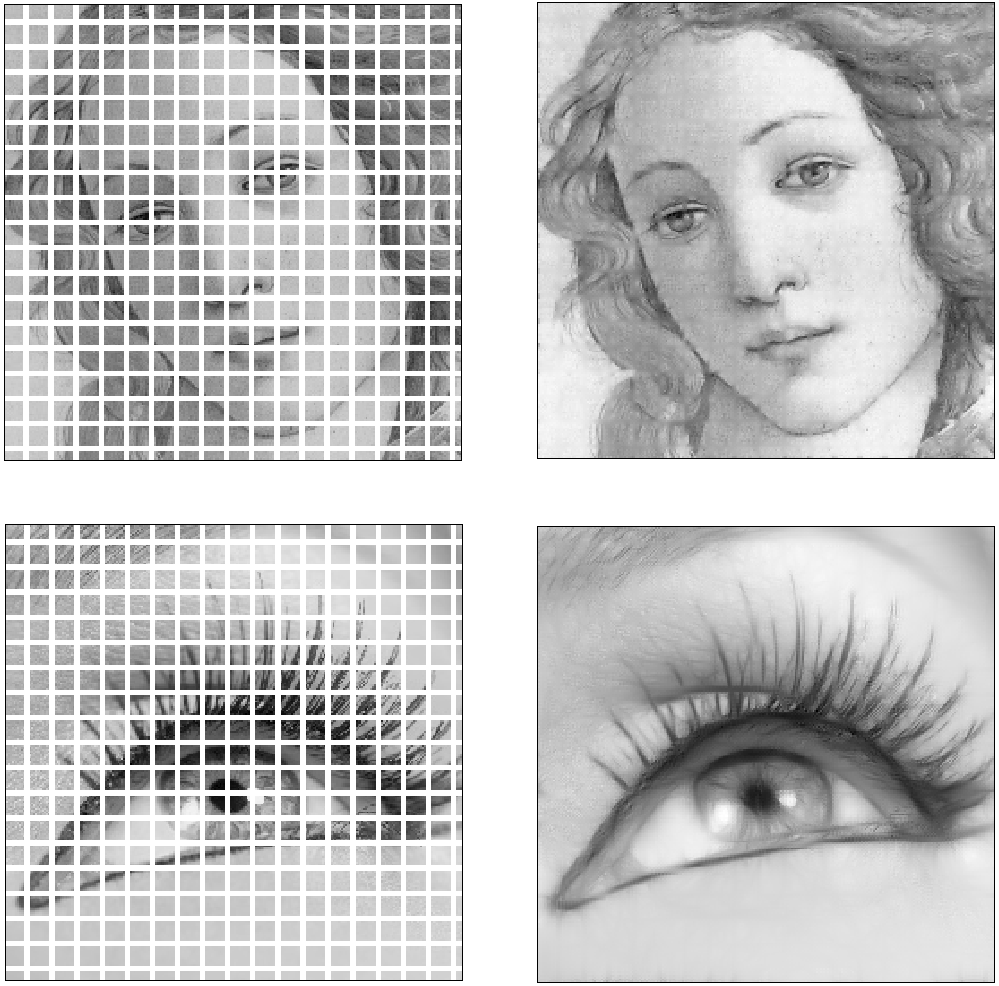}
\caption{The initial corrupted images (left) and the images reconstructed via the hypoelliptic diffusion 
\eqref{contdif-weight} 
and the DR procedure with parameters from Table~1 (right). 
}\label{Fig3}
\end{center}
\end{figure}

For instance, consider two different points $(x',y')$ and
$(x'',y'')$ of the set $B$. Let $T'$ be the time of the diffusion
required for $(x',y')$ being reconstructed well, $T''$ be the same
for $(x'',y'')$, and $T' \ll T''$. Applying the diffusion with $T
\approx T'$, we get the image not reconstructed enough at
$(x'',y'')$. On the other hand, in the case $T \approx T''$ the
image is reconstructed well at $(x'',y'')$, but a defect near the
point $(x',y')$ can appear. This brings us to a natural idea: to use
the diffusion with the time $T$ depending on a point $(x,y)$.
A simple way to realize it is to make the sets of good and bad
points changing in time so that a bad point at some moment may
become good, and the effect of the diffusion at this point is not
essential anymore.

\medskip

Define the initial set of good points $G_0$ as before (the set of
non-corrupted points of the image) and define $G_i$, $i=1, \ldots,
n$, as follows:

Denote by $f_{i-1}(x,y)$ the image at step $i-1$.
Then $G_i$ contains $G_{i-1}$ and all points $(x,y)$ of the boundary
$\partial B_{i-1}$ such that the value $f_{i-1}(x,y)$ is larger than
the average value of $f_{i-1}$
over the points of the set $B_{i-1}$ in a neighborhood of $(x,y)$.
In practice, it is convenient to choose for such a neighborhood the usual discrete 9-points neighborhood,
that is, the set of 9 points $(x_k,y_l)$, where
$x_k = x + k\Delta x$, $y_l = y + l\Delta y$, $k,l = -1,0,1$.

The set $B_i$ is defined as the complement of $G_i$.


\appendix

\section{Results with high corruption rates \label{APP6}}

Here we present a series of results of reconstruction with high corruption rates
via the hypoelliptic diffusion \eqref{contdif-weight} and the DR procedure
with different parameters found experimentally and  listed in Table~1 below.

\begin{table}[ht]
\caption{Parameters of the reconstruction for Fig. \ref{Fig2} -- \ref{FigE9}.
The images have resolution $256 \times 256$ (pixels) and
the corrupted parts consist of vertical and horizontal lines of width~$w$:
}
\begin{center}
\begin{tabular}{|c|c|c|c|c|c|c|}
\hline
\multicolumn{1}{|c|}{$\!\phantom{\Bigl|}$ Figure: $\!\phantom{\Bigr|}$} & \multicolumn{2}{|c|}{Diffusion:} & \multicolumn{2}{|c|}{Restoration:} & \multicolumn{2}{|c|}{Corruption:} \\
{}   &   $\alpha$ & $T$    &  $\phantom{o}$ $n$ $\phantom{o}$ & $\epsilon$ &   $w$, pixels  &  Total, \% \\
\hline
\ref{Fig2}, up  & 2.0 &  0.8 &  200 & 0.5  &  3  & 37 \\
\hline
\ref{Fig2}, down & 3.0 &  0.2 &  200 & 0.5  &  3  & 37 \\
\hline
\ref{Fig3}, up & 4.0 &  0.2 &  200 & 0.5  &  3  & 37 \\
\hline
\ref{Fig3}, down & 0.30 &  4.0 &  160 & 0.5 &  3  & 37\ \\
\hline
\ref{FigE1} & 0.30 &  4.0 &  160 &  0.5 &  3   & 67 \\
\hline
\ref{FigE2} & 0.30 &  4.0 &  160 &  0.5  &  4 & 58 \\
\hline
\ref{FigE3} & 0.30 &  4.0 &  160 &  0.5 &  5  & 65 \\
\hline
\ref{FigE4} & 0.40 &  4.0 &  120 &  0.5 &  5  & 69 \\
\hline
\ref{FigE5} & 0.35 &  5.0 &  160 &  0.5 &  6  & 67 \\
\hline
\ref{FigE6} & 0.33 &  6.0 &  180 &  0.5 &  7  & 68 \\
\hline
\ref{FigE7} & 0.33 &  6.0 &  250 &  0.5 &  8  & 43 \\
\hline
\ref{FigE8} & 0.33 &  6.0 &  250 &  0.5 &  8  & 53 \\
\hline
\ref{FigE9} & 0.33 &  6.0 &  250 &  0.5 &  10  & 41 \\
\hline
\end{tabular}
\end{center}
\end{table}


\begin{figure}[h!]
\begin{center}
\includegraphics[width=330pt,height=153pt]{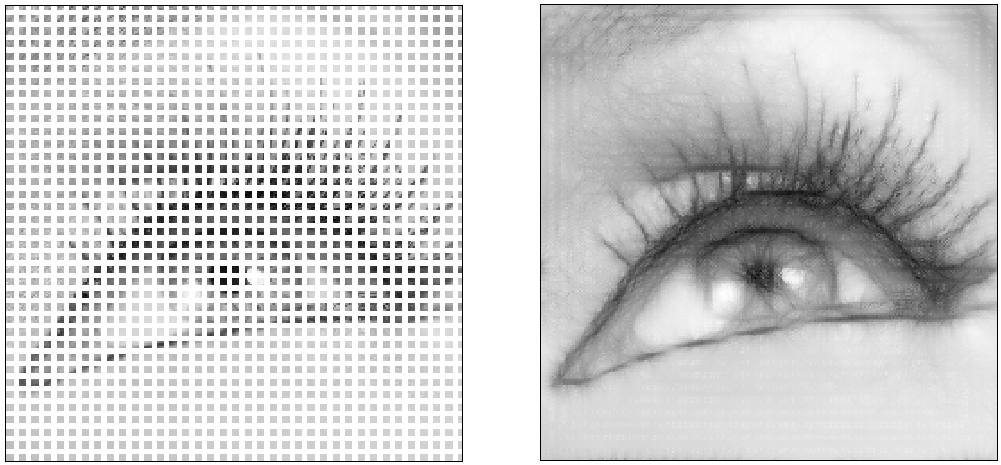}
\caption{}
\label{FigE1}
\end{center}
\end{figure}

\begin{figure}[h!]
\begin{center}
\includegraphics[width=330pt,height=153pt]{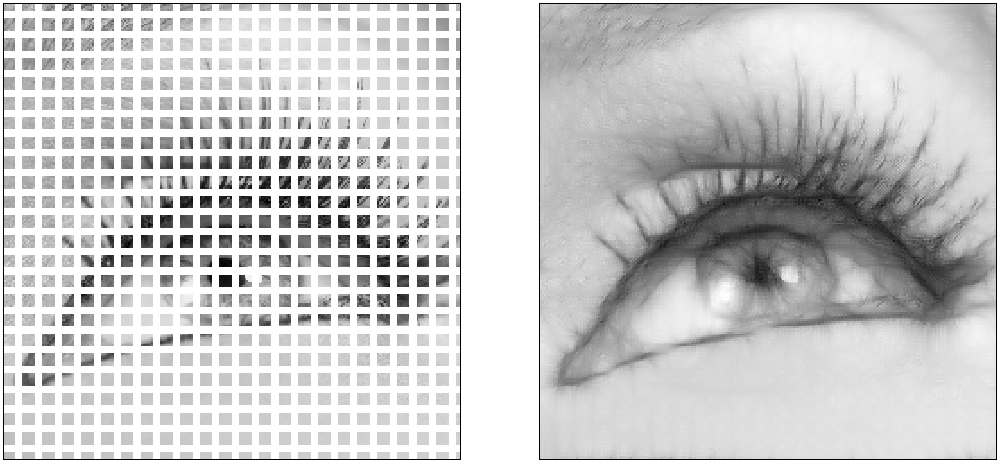}
\caption{}
\label{FigE2}
\end{center}
\end{figure}

\begin{figure}[h!]
\begin{center}
\includegraphics[width=330pt,height=153pt]{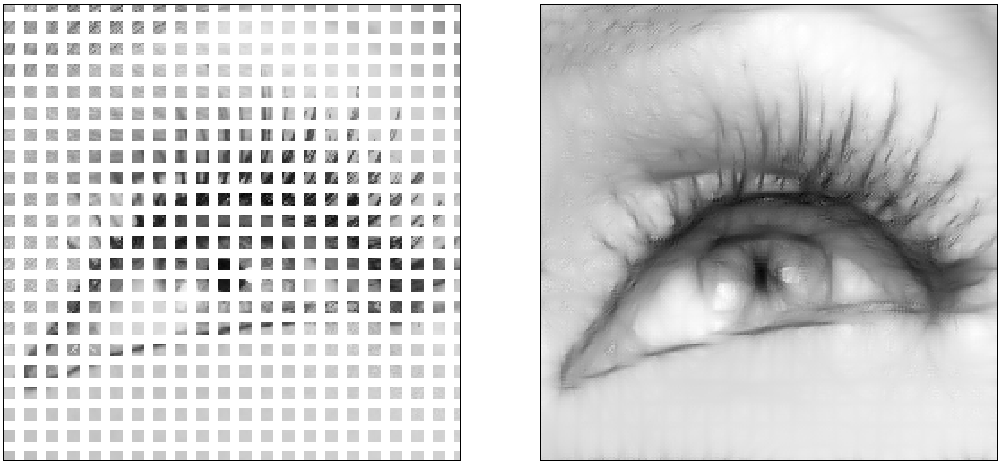}
\caption{}
\label{FigE3}
\end{center}
\end{figure}

\begin{figure}[h!]
\begin{center}
\includegraphics[width=330pt,height=153pt]{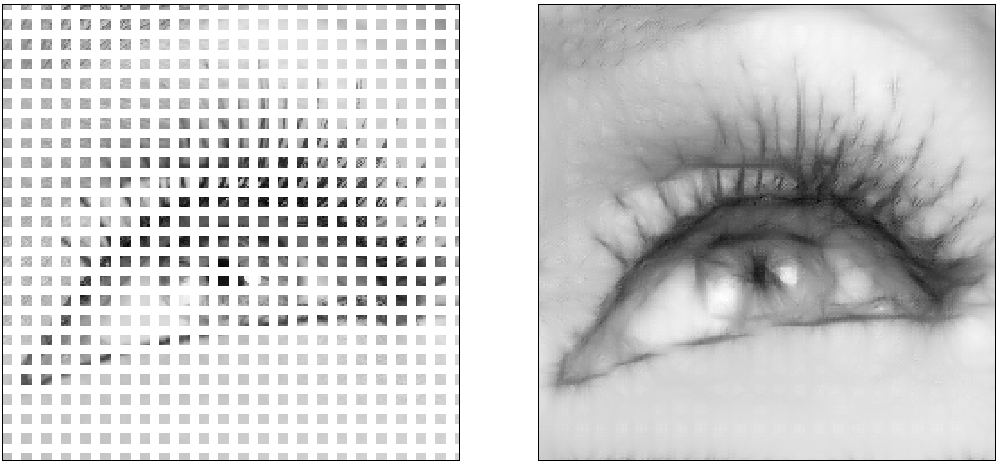}
\caption{}
\label{FigE4}
\end{center}
\end{figure}

\begin{figure}[h!]
\begin{center}
\includegraphics[width=330pt,height=153pt]{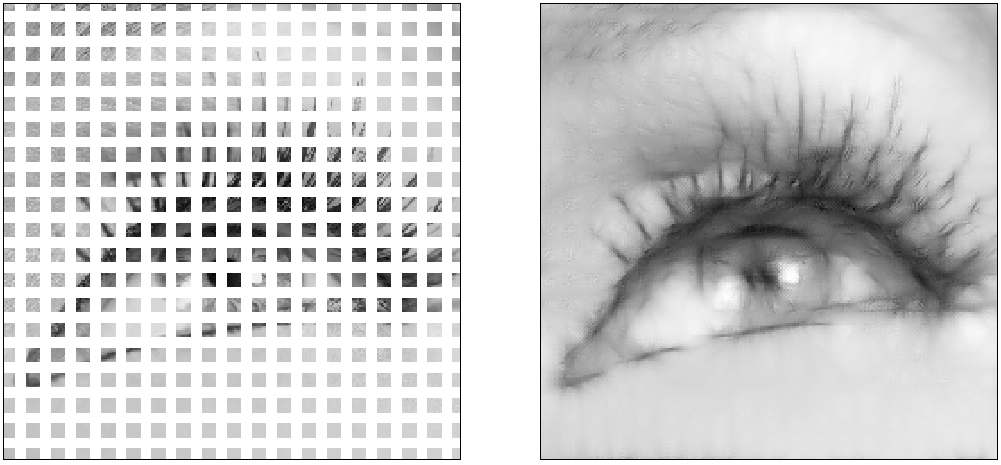}
\caption{}
\label{FigE5}
\end{center}
\end{figure}

\begin{figure}[h!]
\begin{center}
\includegraphics[width=330pt,height=153pt]{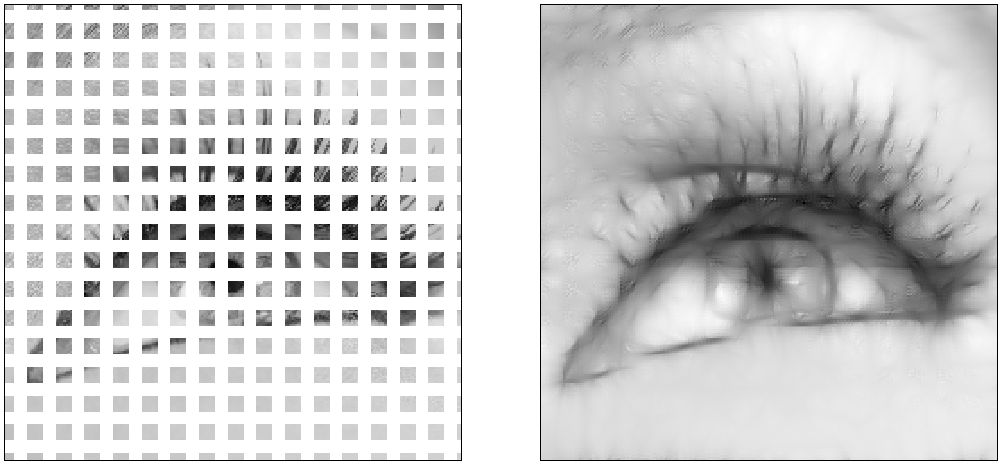}
\caption{}
\label{FigE6}
\end{center}
\end{figure}

\begin{figure}[h!]
\begin{center}
\includegraphics[width=330pt,height=153pt]{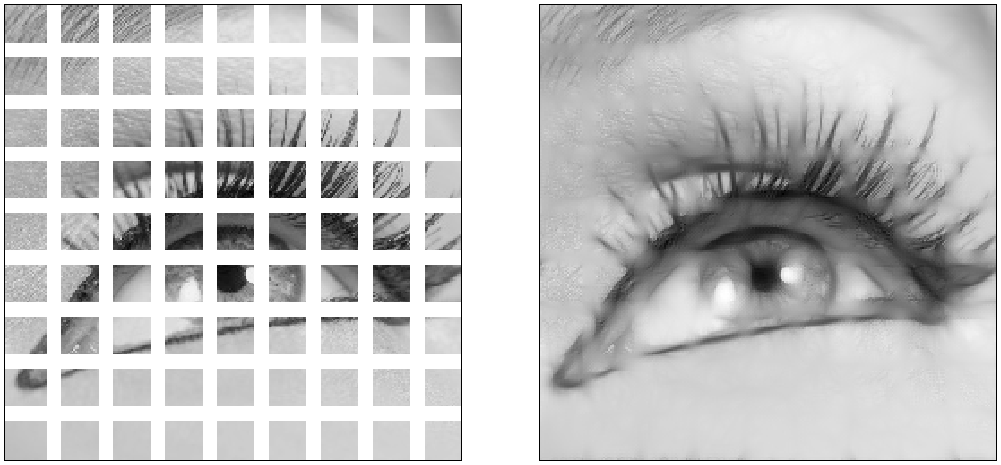}
\caption{}
\label{FigE7}
\end{center}
\end{figure}

\begin{figure}[h!]
\begin{center}
\includegraphics[width=330pt,height=153pt]{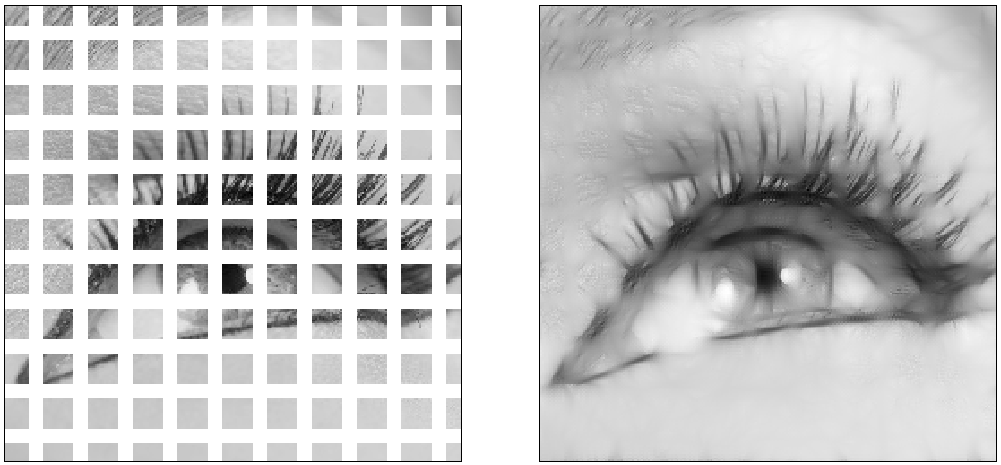}
\caption{}
\label{FigE8}
\end{center}
\end{figure}

\begin{figure}[h!]
\begin{center}
\includegraphics[width=330pt,height=153pt]{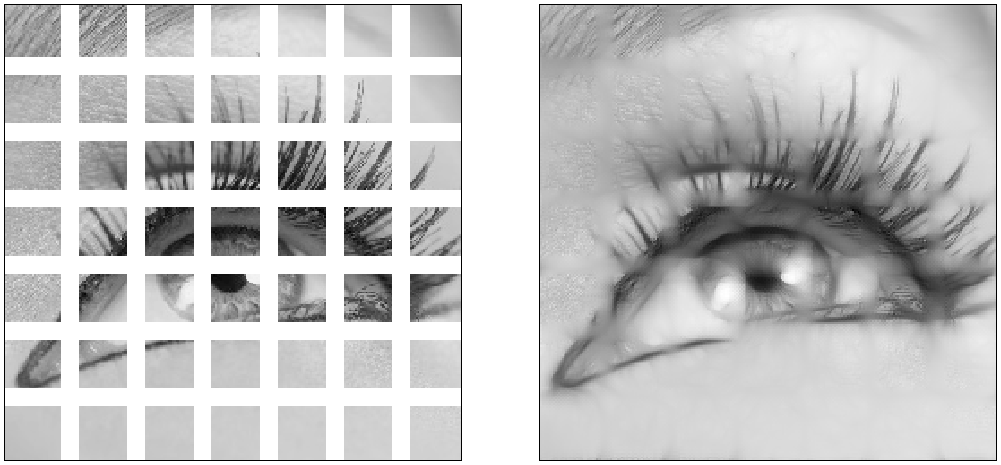}
\caption{}
\label{FigE9}
\end{center}
\end{figure}


\newpage

\section{Theoretical complements \label{APP7}}

In this appendix, we collect shortly a few results needed for the
understanding of the paper.

\subsection{The Generalized Fourier Transform (GFT)\label{GFT}}

In this appendix we recall the theory of the Generalized Fourier Transform. For an introduction including the non-unimodular case, see \cite{duflo}.

Given a locally compact unimodular topological group $G$ of
Type~I\footnote{We do not say what type~I means. We just need here
to know that both $SE(2)$ and $SE(2,N)$ are type~I. For instance,
any connected semi-simple or nilpotent group is type~I.}, the dual
$\widehat{G}$ is the set of (strongly) continuous complex unitary
irreducible representations $(\chi_{\widehat{g}},H_{\widehat{g}})$
of $G$. For $\widehat{g}\in\widehat{G},$ $\chi_{\widehat {g}} \colon
G \to U(H_{\widehat{g}})$, the unitary group of the complex Hilbert
space $H_{\widehat{g}}$. The space of complex $L^{2}$ functions over
$G$ with respect to the Haar measure is denoted by $L^{2}(g,dg)$.
The generalized Fourier transform (GFT) of $f\in L^{2}(g,dg)$ is
defined by\footnote{As usual, the integral below is well defined for
$f\in L^{1}(G,dg)$ only, but is extended by continuity.}:
\begin{equation}
\widehat{f}(\widehat{g})=\int\limits_{G}f(g)\chi_{\widehat{g}}(g^{-1})\,dg.
\label{gft}
\end{equation}

The operator $\widehat{f}(\widehat{g})$ is Hilbert-Schmidt over
$H_{\widehat{g}}$ and there is a measure $d\widehat{g}$ over
$\widehat{G}$, called the Plancherel measure, such that the GFT is
an isometry to $L^{2}(\widehat{g},d\widehat{g})$, where the space
$L^{2}(\widehat{g},d\widehat{g})$ is the continuous Hilbert sum of
the spaces of Hilbert-Schmidt operators over the spaces
$H_{\widehat{g}}$, with respect to Plancherel's measure. As a
consequence, we have the inversion formula:
\begin{equation}
f(g)=\int\limits_{\widehat{G}}\widehat{f}(\widehat{g})\chi_{\widehat{g}}(g)\,d\widehat{g}.
\label{gftinv}
\end{equation}
The GFT is a natural extension of the ordinary Fourier transform
over abelian groups and it has all the corresponding properties,
such as: mapping convolution to product, etc. (see~\cite{ABG}).

As in the case of the usual heat equation over $\mathbb{R}^{n}$, we
use it to solve our heat equations over the groups $SE(2)$ and
$SE(2,N)$.

\subsection{Bohr Compactification and Almost Periodic Functions\label{map}}

The Bohr compactification of a topological group $G$ is the
universal object $(G^{\flat},\widetilde{\sigma})$ in the category of
diagrams:
$$
\sigma \colon G \to H,
$$
where $\sigma$ is a continuous homomorphism from $G$ to a compact
group $H$. If the mapping $\widetilde{\sigma} \colon G \to
G^{\flat}$ is injective, $G$ is called maximally almost periodic (MAP).

The set $AP(G)$ of almost periodic functions over $G$ is the pull
back by $\widetilde{\sigma}$ of the set of continuous functions over
$G^{\flat}$.

The group $G$ is MAP iff the continuous unitary finite dimensional
representations of $G$ separate the points. A connected locally
compact group is MAP iff it is the direct product of a compact group
by $\mathbb{R}^{n}$. The group $SE(2,N)$ is MAP, while $SE(2)$
is not.

A continuous function $f\in AP(G)$ iff its right (or left)
translated form a relatively compact subset of $E(G)$, the set of
bounded continuous functions over $G$, iff it is a uniform limit of
coefficients of unitary irreducible representations of $G$. If $G$
is MAP, $AP(G)$ is dense in the space of continuous functions over
$G$, in the topology of uniform convergence over compact sets.

For (Chu) duality over MAP groups, see \cite{CHU} and the book~\cite{HEY}.
For introduction to almost periodic functions, see the original
paper~\cite{Weil} and the nice exposition~\cite{DIX}.

\subsection{Proof that expressions (2.10) and (2.12) are identical\label{kerproof}}

In this section, all integer indices take values between $1$ and $N$, and addition is always modulo~$N$.

Set $M_{\lambda,\nu}(t) = e^{\widetilde{A}_{\lambda,\nu}t}$, where
the matrix $\widetilde{A}_{\lambda,\nu}$  is defined
in~\eqref{restf}. Then we need to establish the identity of the two expressions:
\begin{align*}
D_{t}^1(z,e_{r})&= \! \int\limits_{\SEN} \! \trace
\Bigl(M_{\lambda,\nu}(t) \cdot \diag_{k}\Bigl(e^{i \langle
V_{\lambda,\nu},R_{k}z \rangle }\Bigr) S^{r} \Bigr) \lambda d\lambda
d\nu, \\
D_{t}^2(z,e_{r})&= \! \int\limits_{\mathbb{R}^2} \!
\bigl(M_{\lambda,\nu}(t) \delta_N \bigr)_r \, e^{i \langle
V_{\lambda,\nu},z \rangle} \lambda d\lambda d\nu.
\end{align*}

Set
$$
M^r_{\lambda,\nu}(t) = e^{\widetilde{A}^r_{\lambda,\nu}t},  \quad
\widetilde{A}^r_{\lambda,\nu} = \Lambda_{N} - \diag_{k}
\bigl(\lambda^{2}\cos^{2}(e_{k+r}-\nu)\bigr).
$$
The following fact is crucial:
\begin{equation}
S^{-r} M_{\lambda,\nu}(t) S^{r} = M^r_{\lambda,\nu}(t), \label{Xc}
\end{equation}
where $S$ is the shift matrix over $\mathbb{C}^{N}$ (i.e.,
$Se_{k}=e_{k+1}$ for $k=1, \ldots, N-1$, and $Se_{N}=e_{1}$). The
equality \eqref{Xc} follows from $S^{-r}
\widetilde{A}^r_{\lambda,\nu} S^{r} = \widetilde{A}_{\lambda,\nu}$,
a consequence of $S^{-r} \Lambda_N S^{r} = \Lambda_N$ and of the
general relation
\begin{equation}
(S^{-r} B S^{r})_{n,m}=B_{n-r,m-r}, \label{Xd}
\end{equation}
which holds true for any $N \times N$ matrix $B$.

An immediate computation shows that
\begin{multline}
D_{t}^1(z,e_{r}) = \! \int\limits_{\SEN} \! \sum_{n}
\Bigl(M_{\lambda,\nu}(t) \cdot \diag_{k}\Bigl(e^{i \langle
V_{\lambda,\nu},R_{k}z \rangle }\Bigr) \Bigr)_{n+r,n} \lambda
d\lambda d\nu = \\
\! \int\limits_{\SEN} \! \sum_{n}
\bigl(M_{\lambda,\nu}(t)\bigr)_{n+r,n} e^{i \langle
V_{\lambda,\nu},R_{n}z \rangle } \lambda d\lambda d\nu. \label{AA1}
\end{multline}

On the other hand, using the relations \eqref{Xc} and \eqref{Xd}, we
have:
\begin{multline*}
D_{t}^2(z,e_{r}) =
\! \int\limits_{\mathbb{R}^2} \!
\bigl(M_{\lambda,\nu}(t) \delta_N \bigr)_r \, e^{i \langle
V_{\lambda,\nu},z \rangle} \lambda d\lambda d\nu = \!
\int\limits_{\mathbb{R}^2} \! \bigl(M_{\lambda,\nu}(t)\bigr)_{r,N}
\, e^{i \langle V_{\lambda,\nu},z \rangle} \lambda d\lambda
d\nu = \\
\! \int\limits_{\SEN} \! \sum_{n}
\bigl(M^n_{\lambda,\nu}(t)\bigr)_{r,N} \, e^{i \langle
V_{\lambda,{\nu-e_n}},z \rangle} \lambda d\lambda
d\nu = \\
\! \int\limits_{\SEN} \! \sum_{n} \bigl(S^{-n} M_{\lambda,\nu}(t)
S^{n}\bigr)_{r,N} \, e^{i \langle V_{\lambda,{\nu}}, R_{-n}z
\rangle} \lambda d\lambda d\nu = \\
\! \int\limits_{\SEN} \! \sum_{n} \bigl(M_{\lambda,\nu}(t)
\bigr)_{r-n,N-n} \, e^{i \langle V_{\lambda,{\nu}}, R_{-n}z \rangle}
\lambda d\lambda d\nu.
\end{multline*}
Finally, changing $n$ for $-n$ and observing that $N+n=n$, we get
the following expression:
\begin{equation*}
D_{t}^2(z,e_{r}) = \! \int\limits_{\SEN} \! \sum_{n}
\bigl(M_{\lambda,\nu}(t) \bigr)_{n+r,n} \, e^{i \langle
V_{\lambda,{\nu}}, R_{n}z \rangle} \lambda d\lambda d\nu,
\end{equation*}
which is exactly \eqref{AA1}.


\begin{acknowledgement}
The authors thank Jean Petitot for his valuable comments and
suggestions, and Prof. Eric Busvelle for his help.
\end{acknowledgement}

\begin{acknowledgement}
This research has been supported by the Europen Research Council, ERC StG 2009 ``GeCoMethods'' (contract 239748),
by the ANR ``GCM'' (programme blanc NT09-504490), and by the DIGITEO project CONGEO.
\end{acknowledgement}


\end{document}